\date{\today}
\newcommand{\Z}{{\mathbb Z}}
\newcommand{\R}{{\mathbb R}}
\newtheorem{theorem}{Theorem} [section]
\newtheorem{lemma}[theorem]{Lemma}
\newtheorem{prop}[theorem]{Proposition}
\newtheorem{definition}[theorem]{Definition}
\newcommand{\p}{\|}
\newcommand{\be}{\begin{equation}}
\newcommand{\ee}{\end{equation}}
\DeclareMathOperator{\dist}{dist}
\newcommand{\eps}{\varepsilon}
\newcommand{\sig}{\sigma}
\newcommand{\om}{\omega}
\begin{document}

\title{Limit-Periodic Schr\"odinger Operators on $\Z^d$: Uniform Localization}

\author[D.\ Damanik]{David Damanik}

\address{Department of Mathematics, Rice University, Houston, TX~77005, USA}



\email{damanik@rice.edu}

\urladdr{www.ruf.rice.edu/$\sim$dtd3}

\author[Z.\ Gan]{Zheng Gan}

\address{Department of Mathematics, Rice University, Houston, TX~77005, USA}



\email{zheng.gan@rice.edu}

\urladdr{math.rice.edu/$\sim$zg2}

\thanks{D.\ D.\ and Z.\ G.\ were supported in part by NSF grant
DMS--0800100.}

\keywords{uniform localization, limit-periodic potentials, Schr\"odinger operators}
\subjclass[2000]{Primary  47B36; Secondary  47B80, 81Q10}

\begin{abstract}
We exhibit $d$-dimensional limit-periodic Schr\"odinger operators that are uniformly localized in the strongest sense possible. That is, for each of these operators, there is a uniform exponential decay rate such that every element of the hull of the corresponding Schr\"odinger operator has a complete set of eigenvectors that decay exponentially off their centers of localization at least as fast as prescribed by the uniform decay rate. Consequently, these operators exhibit uniform dynamical localization.
\end{abstract}

\maketitle

\section{Introduction}
Localization is a topic that has been explored in the context of Schr\"odinger operators to a great extent. In \cite{dg3}, we investigated uniform localization in one-dimensional limit-periodic Schr\"odinger operators. We refer the reader to that paper for a discussion of history and context. When presenting the results from \cite{dg3}, we were asked by a number of people whether there are analogous results in higher dimensions. The purpose of this paper is to give an affirmative answer. That is, we generalize the results from \cite{dg3} and exhibit the phenomenon of uniform localization in multi-dimensional limit-periodic Schr\"odinger operators.

Before introducing the relevant Schr\"odinger operators, we give some preliminary definitions first that will be necessary to describe the resulting hulls.

\begin{definition}\label{d.cgmt}
{\rm (a)} We say that $\Omega$ is a \emph{Cantor group} if it is an infinite, totally disconnected, metrizable, compact Abelian group. We fix a metric $\mathrm{dist}$ on $\Omega$ that is compatible with the topology.

{\rm (b)} Consider a Cantor group $\Omega$ and a $\Z^d$ action by translations, $\{ T^n \}_{n \in \Z^d}$. That is, there are $\alpha_1 , \ldots , \alpha_d \in \Omega$ such that for $\omega \in \Omega$, we have
\begin{equation}\label{e.taction}
T^n \omega = \omega + \sum_{j = 1}^d n_j \alpha_j,
\end{equation}
where we write the group operation as $+$.\footnote{While $+$ is a natural way to denote the group operation in the abstract setting, for the concrete groups that arise as hulls of limit-periodic elements of $\ell^\infty(\Z^d)$, this is ambiguous. Thus, in the concrete setting, we will prefer to use $\cdot$ to denote the group operation.} We say that the action is \emph{minimal} if all orbits are dense, that is, for each $\omega \in \Omega$, we have $\overline{\{ T^n \omega : n \in \Z^d \}} = \Omega$.
\end{definition}

Given a Cantor group $\Omega$ that admits a minimal $\Z^d$ action $T$ by translations, we consider Schr\"odinger operators $H_\omega$ acting on $\ell^2(\Z^d)$ as
\begin{equation}\label{equ:oper}
(H_\omega u)(n) = \sum_{|l|_1=1}u(n+l) + V_\omega(n) u(n),
\end{equation}
where
\begin{equation}\label{equ:pot}
V_\omega (n) = f(T^n  \omega ) , \quad \omega \in \Omega, \; n \in
\Z^d
\end{equation}
with $f \in C(\Omega, \R)$.

\begin{definition}
We say that a family $\{u_n \} \subset \ell^2(\Z^d)$ is uniformly localized if there exist constants $r > 0$, called the decay rate, and $C < \infty$
such that for every element $u_n$ of the family, one can find $m_n \in \Z^d$, called the center of localization, so that $|u_n(k)| \leq C e^{-r|k-m_n|}$ for every $k \in \Z^d$. We say that the operator $H_\omega$ has ULE if it has a complete set of uniformly localized eigenfunctions.\footnote{Recall that a set of vectors is called complete if their span (i.e., the set of finite linear combinations of vectors from this set) is dense.}
\end{definition}

The notion of uniformly localized eigenfunctions and related ones were introduced by del Rio et al.\ in their comprehensive study of the question ``What is localization?'' \cite{dj2,dj}. As explained there, ULE implies uniform dynamical localization, that is, if $H_\omega$ has ULE, then
\begin{equation}\label{e.udl}
\sup_{t \in \R} \left| \left\langle \delta_n , e^{-itH_\omega} \delta_m \right\rangle \right| \le C_\omega e^{-r_\omega |n-m|}
\end{equation}
with suitable constants $C_\omega , r_\omega \in (0,\infty)$. While both properties are desirable, they are extremely rare.

The occurrence of pure point spectrum for the operators $\{H_\omega\}_{\omega \in \Omega}$ is called \textit{phase stable} if it holds for every $\omega \in \Omega$. It is an unusual phenomenon since most known models are not phase stable. It is known that uniform localization of eigenfunctions (ULE) has a close connection with phase stability of pure point spectrum. Indeed, it was shown in \cite[Theorem C.1]{dj} that if a $\mu$-ergodic family $\{H_\omega\}_{\omega \in \Omega}$ of Schr\"odinger operators on $\ell^2(\Z^d)$ with continuous sampling function has ULE for $\om$ in a set of positive $\mu$-measure, then $H_\om$ has pure point spectrum for every $\om \in \mathrm{supp}(\mu)$, where $\mathrm{supp}(\mu)$ denotes the topological support of $\mu$, that is, the complement of the largest open set $S \subset \Omega$ for which $\mu(S) = 0$. Jitomirskaya pointed out in \cite{j} that this result can be strengthened for a minimal $T$ in the sense that if there exists some $\om_0$ such that $H_{\om_0}$ has ULE, then $H_\om$ has pure point spectrum for every $\om \in \mathrm{supp}(\mu)$.

Our main result is the following.

\begin{theorem} \label{thm:main}
There exists a Cantor group $\Omega$ that admits a minimal $\Z^d$ action $T$ by translations, and an $f \in C(\Omega,\R)$ such that for every $\om \in \Omega$ the Schr\"odinger operator with potential $f(T^n \om)$ has ULE with $\omega$-independent constants. In particular, we have uniform dynamical localization \eqref{e.udl} for every $\omega$  with $\omega$-independent constants as well.
\end{theorem}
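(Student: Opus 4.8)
The plan is to construct the example explicitly by periodic approximation, adapting the one-dimensional construction from \cite{dg3} to $\Z^d$. The starting point is a sequence of periodic potentials $V_k$ on $\Z^d$ whose periods $p_k$ grow rapidly (say along a sequence of the form $p_k = q_k^d$ with $q_k \mid q_{k+1}$, so that the corresponding hull is a product of finite cyclic groups and hence a Cantor group $\Omega$ with a minimal $\Z^d$ translation action). The key analytic input is that a periodic Schr\"odinger operator on $\Z^d$ has purely absolutely continuous spectrum, but one should instead think of the restriction of $H$ to a box of side $q_k$ (with, say, periodic boundary conditions), which has $q_k^d$ eigenvalues; the idea is to choose $V_{k+1}$ as a small perturbation of $V_k$, supported so as to not disturb the already-constructed eigenfunctions too much, so that the true limiting operator $H_\omega = \lim H_{\omega,k}$ inherits, via a telescoping/Cauchy argument, a complete orthonormal basis of eigenfunctions that are exponentially localized at a uniform rate.

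Concretely, the key step is a quantitative perturbative lemma: if $W$ is a potential that is periodic with large period $P$ and the finite-volume operator already has well-separated eigenvalues with eigenfunctions localized in small boxes, then adding a further periodic perturbation $\Delta V$ of even larger period $P'$ and sufficiently small sup-norm produces only small, exponentially-controlled corrections to those eigenfunctions (by standard Combes--Thomas / resolvent estimates, using the spectral gaps). Iterating this, with the sizes $\|V_{k+1} - V_k\|_\infty$ decaying fast enough relative to the spectral gaps produced at stage $k$, one builds $f \in C(\Omega,\R)$ as the uniform limit. Because all estimates at stage $k$ are translation-covariant, the resulting exponential decay rate $r$ and constant $C$ do not depend on the base point $\omega$; and since the hull of $f$ is exactly the $\Omega$ above with its minimal $\Z^d$ action, every $H_\omega$ in the family is obtained this way, giving ULE with $\omega$-independent constants. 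The uniform dynamical localization bound \eqref{e.udl} with $\omega$-independent constants then follows immediately from the implication ULE $\Rightarrow$ \eqref{e.udl} recalled after Definition~1.4, tracking that the constants there depend only on $C$, $r$, and the dimension.

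The main obstacle, and the place where the one-dimensional argument genuinely needs new ideas, is the construction of the periodic building blocks in $d \geq 2$: in one dimension one can use the classical fact that a periodic potential can be chosen to open all gaps and to have eigenfunctions (on a period cell, Bloch-type) that are as localized as desired, essentially because periodic Jacobi matrices are very flexible and one has the transfer-matrix formalism. In $\Z^d$ there is no transfer matrix, and opening spectral gaps and producing localized finite-volume eigenfunctions must instead be done by choosing the potential on a $q_k$-box to be a generic, highly oscillatory function whose values are rationally independent enough to force all $q_k^d$ Dirichlet/periodic eigenvalues to be simple and uniformly separated, and whose large local variations force the eigenfunctions to concentrate. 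Making this quantitative---so that the gap sizes at stage $k$ can be made to dominate the perturbation $\|V_{k+1}-V_k\|_\infty$ at stage $k+1$ while the localization length stays bounded---is the technical heart of the proof, and I expect it to be handled by a careful choice of a "separated bump" potential on each scale together with an eigenvalue-perturbation estimate (Hellmann--Feynman plus min-max) rather than by any structural result special to $d=1$.
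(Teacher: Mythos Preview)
Your proposal is not a proof but a plan, and it diverges sharply from the paper's argument. The paper does not build eigenfunctions by iterated periodic approximation at all. Instead it (i) constructs explicitly a \emph{distal} limit-periodic $V\in\ell^\infty(\Z^d)$, meaning $\inf_i|V_i-V_{i+k}|\ge Q(|k|)^{-1}$ for an approximation function $Q$ (Lemma~\ref{lem:distal}); (ii) takes $\Omega=\mathrm{hull}(V)$; (iii) invokes P\"oschel's KAM-type theorem (Theorems~\ref{thm:poeschel1}--\ref{thm:pcor}) inside the translation-invariant Banach algebra $\mathcal C$ generated by $C(\Omega,\R)$: for small $\eps$ there is $\tilde V$ with $\tilde V-V\in\mathcal C$ and a unitary $Q$ with $Q_k\in\mathcal C$ and $\|Q_k\|_\infty\le Ce^{-r|k|}$ diagonalizing $\eps\Delta+\tilde V$. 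The $\omega$-independence of $C,r$ is then read off from the fact that each diagonal $Q_k$ is itself a sampled continuous function on $\Omega$, so translating $\omega_e$ along its orbit (and then passing to limits) just shifts the sequences and preserves all bounds.

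Your scheme has a genuine gap precisely where the paper's use of P\"oschel does the work. Passing from period $q_k$ to period $q_{k+1}$, the finite-volume spectrum does not merely get perturbed: each old eigenvalue becomes $(q_{k+1}/q_k)^d$-fold nearly degenerate (one copy per translate of the $q_k$-cell inside the $q_{k+1}$-cell), and the tiny perturbation $V_{k+1}-V_k$ is what must split these clusters. First-order perturbation theory then mixes the translated copies, and without a quantitative lower bound on the splitting \emph{relative to the distance between copies}, the new eigenvectors can delocalize across the whole $q_{k+1}$-cell; Combes--Thomas and min-max give you nothing here because the relevant spectral gaps are zero before the perturbation. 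Controlling this is exactly a small-divisor problem, and it is what the distal condition $|V_i-V_{i+k}|\ge Q(|k|)^{-1}$ together with P\"oschel's KAM iteration is designed to handle. Your ``separated bump'' heuristic cannot produce uniformly separated eigenvalues at all scales for a \emph{bounded} potential (with $q_k^d$ eigenvalues in a fixed interval the minimal gap is $O(q_k^{-d})$), so the hypothesis of your perturbative lemma fails in the limit. If you want to salvage the outline, you would essentially have to rediscover P\"oschel's scheme: arrange the diagonal to be distal and run a Newton/KAM iteration in a weighted matrix algebra rather than a naive gap-based perturbation step.
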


The proof is constructive and produces a family of pairs $(\Omega,f)$ for which the statement holds. The precise description of this family requires the material we will present in Section~\ref{sec:Cantor} below.

As in \cite{dg3}, we will still heavily use P\"oschel's results from \cite{p} (which will be recalled in Section~\ref{sec:poeschel}) to obtain the above theorem. The key step in proving Theorem~\ref{thm:main} is to construct a distal limit-periodic potential in our framework, which will be done in Section~\ref{sec:distal}. The proof of Theorem~\ref{thm:main} will be presented in Section~\ref{sec:localization}.

\section{The Connection Between Limit-Periodicity and Cantor Groups} \label{sec:Cantor}

In this section we establish a connection between the hulls of limit-periodic elements of $\ell^\infty(\Z^d)$ and Cantor groups which admit a minimal $\Z^d$ action by translations. For $d=1$, this was worked out in detail in \cite[Section~2]{a}; here we present its generalization to the general case. While this connection is probably well known to experts, its presentation is still worthwhile since it is fundamental for this paper and all its predecessors \cite{a, dg1, dg2, dg3}.

\begin{definition}\label{d.hull}
Let $d \in \Z_+$. The group $\Z^d$ acts on $\ell^\infty(\Z^d)$ as $(S_m V)(n) = V(n - m)$ for $n,m \in \Z^d$ and $V \in \ell^\infty(\Z^d)$. The set $\mathrm{orb}(V) = \{ S_m V : m \in \Z^d \}$ is called the \emph{orbit} of $V$ and the closure of its orbit is called its \emph{hull}, that is, $\mathrm{hull}(V) = \overline{\mathrm{orb}(V)}$. An element $V$ of $\ell^\infty(\Z^d)$ is called \emph{periodic} if its orbit is finite. It is called \emph{limit-periodic} if it belongs to the closure of the set of periodic elements of $\ell^\infty(\Z^d)$.
\end{definition}

$V$ is periodic in the sense of Definition~\ref{d.hull} if and only if it is periodic in each direction, that is, there are $p_1 , \ldots , p_d \in \Z_+$ such that for all $n = (n_1, \ldots , n_d)$, $k = (k_1 , \ldots , k_d) \in \Z^d$, we have $V(n_1 + k_1 p_1 , \ldots, n_d + k_d p_d) = V(n_1 , \ldots , n_d)$. We will call $p = (p_1,\ldots,p_d) \in (\Z_+)^d$ a \textit{periodicity vector} of $V$.

The following proposition describes how limit-periodic elements of $\ell^\infty(\Z^d)$ may be generated.

\begin{prop}\label{p.2}
Suppose $\Omega$ is a Cantor group that admits a minimal $\Z^d$ action by translations, $\{ T^n \}_{n \in \Z^d}$. Then, for every $f \in C(\Omega,\R)$ and every $\omega \in \Omega$, the element $V_\omega$ of $\ell^\infty(\Z^d)$ defined by $V_\omega(n) = f(T^n \omega)$ is limit-periodic. Moreover, for each $\omega \in \Omega$, we have $\mathrm{hull}(V_\omega) = \{ V_{\tilde \omega} : \tilde \omega \in \Omega \}$.
\end{prop}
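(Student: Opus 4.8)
The plan is to prove the two assertions in sequence: first, that each $V_\omega$ is limit-periodic; second, that its hull is exactly $\{ V_{\tilde\omega} : \tilde\omega \in \Omega \}$.

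For the limit-periodicity of $V_\omega$, the idea is to approximate $f$ uniformly by functions that factor through finite quotients of $\Omega$, and to transfer this approximation to the sequence level. Concretely, since $\Omega$ is a Cantor group, it has a neighborhood basis of the identity consisting of compact open subgroups $\Omega_k$ with $\Omega/\Omega_k$ finite; one should check that for each $\eps > 0$ there is such an $\Omega_k$ and a function $f_k \in C(\Omega,\R)$, constant on each coset of $\Omega_k$, with $\p f - f_k \p_\infty < \eps$ (this uses uniform continuity of $f$ on the compact group $\Omega$ together with the fact that the $\Omega_k$ shrink to the identity). The potential $V_\omega^{(k)}(n) = f_k(T^n\omega)$ then depends only on the coset $T^n\omega + \Omega_k = \omega + \sum_j n_j \alpha_j + \Omega_k$, and since $\Omega/\Omega_k$ is finite, the images of $\alpha_1,\ldots,\alpha_d$ in $\Omega/\Omega_k$ have finite order, say $\alpha_j$ has order dividing $p_j$; hence $V_\omega^{(k)}$ is periodic with periodicity vector $(p_1,\ldots,p_d)$. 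Finally $\p V_\omega - V_\omega^{(k)} \p_\infty = \sup_n |f(T^n\omega) - f_k(T^n\omega)| \le \p f - f_k\p_\infty < \eps$, so $V_\omega$ lies in the closure of the periodic elements, i.e. is limit-periodic.

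For the hull computation, the key observation is the conjugation relation $S_m V_\omega = V_{T^m \omega}$: indeed $(S_m V_\omega)(n) = V_\omega(n - m) = f(T^{n-m}\omega) = f(T^n(T^{-m}\omega)) = V_{T^{-m}\omega}(n)$, so $\mathrm{orb}(V_\omega) = \{ V_{\tilde\omega} : \tilde\omega \in \{T^m\omega : m \in \Z^d\} \}$. Now consider the map $\Phi : \Omega \to \ell^\infty(\Z^d)$, $\Phi(\tilde\omega) = V_{\tilde\omega}$. One checks $\Phi$ is continuous: if $\dist(\tilde\omega, \tilde\omega') $ is small then, by uniform continuity of $f$ and continuity of the $T^n$ (each $T^n$ is translation by a fixed group element, hence an isometry-like map — in any case uniformly continuous on the compact $\Omega$, and with a modulus of continuity that can be taken uniform in $n$ since $T^n$ acts by translation), $\sup_n |f(T^n\tilde\omega) - f(T^n\tilde\omega')|$ is small, giving $\p \Phi(\tilde\omega) - \Phi(\tilde\omega')\p_\infty$ small. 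Since $\Omega$ is compact, $\Phi(\Omega)$ is compact, hence closed in $\ell^\infty(\Z^d)$. By minimality, $\{T^m\omega : m \in \Z^d\}$ is dense in $\Omega$, so $\Phi(\{T^m\omega\})$ is dense in $\Phi(\Omega)$; therefore $\mathrm{hull}(V_\omega) = \overline{\Phi(\{T^m\omega : m\})} \subseteq \Phi(\Omega)$. For the reverse inclusion, given $\tilde\omega \in \Omega$, pick $m_i$ with $T^{m_i}\omega \to \tilde\omega$; then $S_{m_i} V_\omega = V_{T^{m_i}\omega} = \Phi(T^{m_i}\omega) \to \Phi(\tilde\omega) = V_{\tilde\omega}$, so $V_{\tilde\omega} \in \mathrm{hull}(V_\omega)$. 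This gives $\mathrm{hull}(V_\omega) = \{ V_{\tilde\omega} : \tilde\omega \in \Omega\}$ for every $\omega$.

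The main obstacle, such as it is, lies in the first part: producing the finite-index compact open subgroups $\Omega_k$ and the approximating functions $f_k$, and verifying carefully that a function locally constant modulo $\Omega_k$ yields a genuinely periodic sequence with an explicit periodicity vector. This rests on the structure theory of Cantor groups (totally disconnected compact Abelian groups have a neighborhood base at the identity of open, hence finite-index, subgroups) and on uniform continuity; once those are in hand the estimates are routine. The second part is essentially soft topology — continuity of $\Phi$, compactness of $\Omega$, and minimality — with the only mild care needed being the uniform-in-$n$ modulus of continuity for the maps $\tilde\omega \mapsto f(T^n\tilde\omega)$, which is immediate because $T^n$ is a translation and $f$ is uniformly continuous.
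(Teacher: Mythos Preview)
Your proof is correct. The overall strategy matches the paper's, but the implementation of the first part differs in a way worth noting.

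For limit-periodicity, the paper does not invoke the structure theorem for totally disconnected compact groups. Instead it proves a small lemma (Lemma~\ref{l.stronglyminimal}) showing that for each generator $\alpha_j$ one can find positive integers $n_k^{(j)}$ with $n_k^{(j)}\alpha_j \to \omega_e$; from this it extracts periods $p_1,\ldots,p_d$ with $p_j\alpha_j$ close to the identity, and then builds the periodic approximation not by modifying $f$ but by setting $V_\omega^p(n)=f(T^{\tilde n}\omega)$ where $\tilde n_j$ is the residue of $n_j$ modulo $p_j$. Your route---take a compact open subgroup $\Omega_k$ from van~Dantzig's theorem, approximate $f$ by a function constant on $\Omega_k$-cosets, and read off the periods from the finite quotient $\Omega/\Omega_k$---is cleaner if one is willing to quote that structural fact, and has the mild advantage that the approximating function is visibly continuous on $\Omega$; the paper's approach is more self-contained. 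Both yield the same periodic approximation up to inessential details.

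For the hull statement your argument via the continuous map $\Phi:\Omega\to\ell^\infty(\Z^d)$ is exactly what the paper sketches in one sentence (``both sides are compact and contain $\mathrm{orb}(V_\omega)$ as a dense subset''). One small point: you rely on the modulus of continuity of $\tilde\omega\mapsto f(T^n\tilde\omega)$ being uniform in $n$. This is true, but the cleanest justification is not that $T^n$ is an isometry (the metric is only assumed compatible with the topology, not translation-invariant) but rather that $T^n\tilde\omega - T^n\tilde\omega' = \tilde\omega - \tilde\omega'$, combined with the group-theoretic form of uniform continuity of $f$ on the compact group $\Omega$.
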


We first prove the following simple lemma:

\begin{lemma}\label{l.stronglyminimal}
Suppose that $\{ T^n \}_{n \in \Z^d}$ is an action by translations as in \eqref{e.taction} on the compact Abelian group $\Omega$. Then, for each $j \in \{1 , \ldots , d\}$, there is a sequence $\{ n^{(j)}_k\}_{k \in \Z_+} \subset \Z_+$ such that $\lim_{k \to \infty} n^{(j)}_k \alpha_j = \omega_e$, the identity element of $\Omega$.
\end{lemma}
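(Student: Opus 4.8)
The plan is to fix $j$ and work inside the closed subgroup generated by $\alpha_j$, namely $\overline{\{ m\alpha_j : m \in \Z \}} \subseteq \Omega$. Call this subgroup $G_j$; it is a compact (hence complete) metrizable Abelian group. The goal is to produce a sequence of \emph{positive} integers $n^{(j)}_k$ with $n^{(j)}_k \alpha_j \to \omega_e$. The key point to exploit is compactness: the sequence $\{ m\alpha_j \}_{m \in \Z_+}$ lies in the compact set $G_j$, so it has a convergent subsequence, say $m_i \alpha_j \to g$ for some $g \in G_j$ as $i \to \infty$, with the $m_i$ strictly increasing in $\Z_+$. Then for $i < i'$ we have $(m_{i'} - m_i)\alpha_j = m_{i'}\alpha_j - m_i\alpha_j \to g - g = \omega_e$ along a suitable diagonal-type extraction of index pairs, and $m_{i'} - m_i \in \Z_+$. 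Making this precise is the only real content.

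Concretely, I would proceed as follows. Since $G_j$ is compact and metrizable, the sequence $(m\alpha_j)_{m\in\Z_+}$ is totally bounded, hence Cauchy along some subsequence: choose $m_1 < m_2 < \cdots$ in $\Z_+$ with $\dist(m_i\alpha_j, m_{i+1}\alpha_j) < 2^{-i}$ for all $i$; this is possible by first passing to a subsequence converging to some limit $g$ and then thinning. Now set $n^{(j)}_k = m_{k+1} - m_k \in \Z_+$. Using translation invariance of a suitable metric — or simply the continuity of the group operations together with $m_k\alpha_j \to g$ — we get
\begin{equation}\nonumber
n^{(j)}_k \alpha_j = m_{k+1}\alpha_j - m_k \alpha_j \longrightarrow g - g = \omega_e \quad \text{as } k \to \infty.
\end{equation}
If one does not want to invoke a translation-invariant metric, one argues directly: the map $(x,y) \mapsto x - y$ from $\Omega \times \Omega$ to $\Omega$ is continuous, so $(m_{k+1}\alpha_j, m_k\alpha_j) \to (g,g)$ forces $m_{k+1}\alpha_j - m_k\alpha_j \to \omega_e$. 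This yields the desired sequence.

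The main (and essentially only) obstacle is purely bookkeeping: ensuring the integers $n^{(j)}_k$ are \emph{positive}, not merely nonzero or of indeterminate sign. This is handled by the strict monotonicity $m_1 < m_2 < \cdots$, which we are free to arrange when extracting the Cauchy subsequence. Note that no minimality of the $\Z^d$ action is needed here — the lemma holds for any action by translations on a compact Abelian group — and indeed this is why the statement is phrased for a general such action rather than assuming minimality; compactness alone does all the work. (One should also remark, for the intended application, that $n^{(j)}_k \to \infty$ may be assumed by further thinning, since infinitely many distinct values of $m_{k+1}-m_k$ must occur unless $\alpha_j$ has finite order, in which case one takes $n^{(j)}_k$ to be multiples of that order; in either case a subsequence tending to infinity with the required convergence is obtained.)
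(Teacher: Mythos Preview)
Your proof is correct and follows essentially the same route as the paper: use compactness of $\Omega$ to extract a convergent subsequence $m_k\alpha_j \to g$ with $m_k$ strictly increasing, then take differences of indices to get positive integers whose $\alpha_j$-multiples converge to $g - g = \omega_e$ by continuity of the group operations. The only cosmetic difference is that the paper, instead of taking consecutive differences $m_{k+1}-m_k$, chooses for each $k$ some $\tilde m_k \in \{m_{k+\ell}:\ell\ge 1\}$ with $\tilde m_k - m_k \ge k$, thereby building $n^{(j)}_k \to \infty$ directly into the construction rather than handling it in a separate remark; since the lemma as stated does not require $n^{(j)}_k \to \infty$, this is immaterial.
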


\begin{proof}
Let us fix $j$ and explain how to find $\{ n^{(j)}_k\}_{k \in \Z_+} \subset \Z_+$. Since $\Omega$ is compact, there exists an increasing sequence of positive integers $m_k \to \infty$ such that $m_k \alpha_j$ converges to some $\omega \in \Omega$ as $k \to \infty$. For each $k$, choose $\tilde m_k \in \{ m_{k + \ell} : \ell \ge 1 \}$ such that $n^{(j)}_k : =  \tilde m_k - m_k \ge k$. Then, $n^{(j)}_k \to \infty$ as $k \to \infty$ and
$\lim_{k \to \infty} n^{(j)}_k \alpha_j = \omega - \omega = \omega_e$, as desired.
\end{proof}

\begin{proof}[Proof of Proposition~\ref{p.2}.]
For a given $\varepsilon > 0$, we may choose a compact open neighborhood $U$ of the identity $\omega_e \in \Omega$ that is small enough so that $| f(\omega + \omega_U) - f(\omega) | < \varepsilon$ for every $\omega_U \in U$ and every $\omega \in \Omega$.

Since $U$ is compact and open, we can choose $\delta > 0$ such that $\dist (\omega_U , \omega_{\Omega \setminus U}) > \delta$ for every $\omega_U \in U$ and every $\omega_{\Omega \setminus U} \in \Omega \setminus U$.

Lemma~\ref{l.stronglyminimal} shows that we can choose $p_1,\ldots,p_d \in \Z_+$ such that $d(\omega_e,p_j \alpha_j) < \delta$ for $j = 1 ,\ldots, d$. By the defining property of $\delta$, it follows that the closure of
$$
\left\{ \sum_{j=1}^d n_j p_j \alpha_j : n = (n_1,\ldots,n_d) \in \Z^d \right\}
$$
is a compact subgroup of $\Omega$ that is contained in $U$. Its index is bounded by $\prod p_j$.

Now, given $f \in C(\Omega,\R)$ and $\omega \in \Omega$, we consider the element $V_\omega$ of $\ell^\infty(\Z^d)$ defined by $V_\omega(n) = f(T^n \omega)$. With the arbitrary choice of $\varepsilon > 0$ above and the resulting $U$ and $\delta > 0$, we consider the following element $V_\omega^p$ of $\ell^\infty(\Z^d)$, $V_\omega^p (n) = f(T^{\tilde n} \omega)$, where $\tilde n = (\tilde n_1 , \ldots , \tilde n_d)$ is defined by $\tilde n_j \in \{0,\ldots, p_j -1 \}$ and $\tilde n_j \equiv n_j \mod p_j$. Thus, $V_\omega^p$ is periodic. We have
\begin{align*}
\|V_\omega - V_\omega^p\|_\infty & = \sup_{n \in \Z^d} |V_\omega(n) - V_\omega^p(n)| \\
& = \sup_{n \in \Z^d} |f(T^n \omega) - f(T^{\tilde n} \omega)| \\
& = \sup_{n \in \Z^d} |f(T^{\tilde n} \omega + (T^n \omega - T^{\tilde n} \omega)) - f(T^{\tilde n} \omega)| \\
& < \varepsilon.
\end{align*}
The first three steps follow by simple rewriting, and the final step follows from the choice of $U$ and the fact that, by construction, $T^n \omega - T^{\tilde n} \omega$ belongs to $U$. This shows that $V_\omega$ is limit-periodic since $\varepsilon > 0$ is arbitrary and $V_\omega^p$ is periodic.

The statement $\mathrm{hull}(V_\omega) = \{ V_{\tilde \omega} : \tilde \omega \in \Omega \}$ follows since both sides are compact and contain $\mathrm{orb}(V_\omega)$ as a dense subset (for the right-hand side, this is a consequence of the minimality of the action). This completes the proof of the proposition.
\end{proof}

Thus, we have seen that a Cantor group that admits a minimal $\Z^d$ action by translations and a continuous sampling function give rise to limit-periodic elements of $\ell^2(\Z^d)$. Let us now turn to the converse. That is, given a limit-periodic element of $\ell^2(\Z^d)$, we want to show that it arises in this way.

\begin{lemma}\label{l.lphulls}
Suppose $V \in \ell^\infty(\Z^d)$ is limit-periodic. Then, $\mathrm{hull}(V)$ is compact and it has a unique topological group structure so that $V$ is the identity element and $\Z^d \to \mathrm{hull}(V)$, $m \mapsto S_m V$ is a homomorphism. Moreover, the group structure is Abelian and there exist arbitrarily small compact open neighborhoods of $V$ in $\mathrm{hull}(V)$ that are finite index subgroups.
\end{lemma}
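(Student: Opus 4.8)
The plan is to verify the four assertions in order; the construction of the group operation is the crux, and the rest is bookkeeping. For compactness, since $\ell^\infty(\Z^d)$ is complete it suffices to show $\mathrm{orb}(V)$ is totally bounded. Given $\eps>0$, pick a periodic $W$ with $\|V-W\|_\infty<\eps/3$; its orbit is a finite set $\{S_{m_1}W,\dots,S_{m_N}W\}$. For arbitrary $m\in\Z^d$ one has $S_mW=S_{m_i}W$ for some $i$, and since each $S_m$ acts isometrically on $\ell^\infty(\Z^d)$,
\[\|S_mV-S_{m_i}V\|_\infty\le\|S_mV-S_mW\|_\infty+\|S_{m_i}W-S_{m_i}V\|_\infty<2\eps/3,\]
so the $\eps$-balls about $S_{m_1}V,\dots,S_{m_N}V$ cover $\mathrm{orb}(V)$; hence $\mathrm{hull}(V)=\overline{\mathrm{orb}(V)}$ is compact.

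For the group structure I would set $S_mV\oplus S_{m'}V:=S_{m+m'}V$ on the orbit. Because $S_mV=S_{m'}V$ forces $m-m'$ into the (possibly trivial) period lattice $\Lambda=\{k\in\Z^d:S_kV=V\}$, this is well defined and turns $\mathrm{orb}(V)\cong\Z^d/\Lambda$ into an Abelian group with identity $V=S_0V$ and $(S_mV)^{-1}=S_{-m}V$. The extension to $\mathrm{hull}(V)$ is possible because, using again that the $S_m$ are isometries with $S_mS_{m'}=S_{m+m'}$,
\[\|S_{m+m'}V-S_{a+a'}V\|_\infty\le\|S_mV-S_aV\|_\infty+\|S_{m'}V-S_{a'}V\|_\infty,\qquad\|S_{-m}V-S_{-a}V\|_\infty=\|S_mV-S_aV\|_\infty,\]
so $\oplus$ and inversion are uniformly continuous on the dense subsets $\mathrm{orb}(V)\times\mathrm{orb}(V)$ and $\mathrm{orb}(V)$ of the compact metric space $\mathrm{hull}(V)$; since $\mathrm{hull}(V)$ is closed in the complete space $\ell^\infty(\Z^d)$, they extend uniquely to continuous maps valued in $\mathrm{hull}(V)$. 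The group axioms and commutativity are closed conditions valid on a dense set, so they pass to $\mathrm{hull}(V)$, which thereby becomes a compact Abelian topological group with identity $V$ for which $m\mapsto S_mV$ is by construction a homomorphism. Uniqueness is then immediate: any topological group structure $\odot$ with $V$ as identity and $m\mapsto S_mV$ a homomorphism must satisfy $S_mV\odot S_{m'}V=S_{m+m'}V$, hence equals $\oplus$ on the dense set $\mathrm{orb}(V)\times\mathrm{orb}(V)$ and therefore everywhere.

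For the last assertion, given $\eps>0$ I would choose a periodic $W$ with $\|V-W\|_\infty<\eps$ and periodicity vector $p=(p_1,\dots,p_d)$, so that $S_mW=W$ for $m$ in the sublattice $\Gamma:=p_1\Z\times\cdots\times p_d\Z$, and set $H:=\overline{\{S_mV:m\in\Gamma\}}$. This is a closed subgroup of $\mathrm{hull}(V)$; writing $\Z^d$ as the union of the $\prod_j p_j$ cosets of $\Gamma$ and using that left translation on $\mathrm{hull}(V)$ is a homeomorphism, one obtains $\mathrm{hull}(V)=\bigcup_c\bigl(S_cV\oplus H\bigr)$, so $H$ has finite index and is therefore also open (its complement is a finite union of closed cosets). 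Finally, for $m\in\Gamma$ one has $\|S_mV-V\|_\infty\le\|S_mV-S_mW\|_\infty+\|W-V\|_\infty=2\|V-W\|_\infty<2\eps$, and passing to limits over $m\in\Gamma$ gives $H\subseteq\{g\in\mathrm{hull}(V):\|g-V\|_\infty\le2\eps\}$. Hence $H$ is a compact open neighborhood of $V$ that is a finite-index subgroup, and shrinking $\eps$ makes these arbitrarily small.

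I do not expect a genuine obstacle: the only steps requiring care are the verification that $\oplus$ is well defined and uniformly continuous on $\mathrm{orb}(V)$ — which rests entirely on the translation invariance of $\|\cdot\|_\infty$ — and the observation that it is limit-periodicity, not mere almost-periodicity, that supplies the periodic approximants $W$ whose period sublattices $\Gamma$ yield the desired clopen finite-index subgroups of $\mathrm{hull}(V)$.
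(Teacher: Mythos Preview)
Your proof is correct and follows essentially the same approach as the paper: total boundedness via periodic approximation, extension of the group law from $\mathrm{orb}(V)$ to $\mathrm{hull}(V)$ via the same uniform-continuity estimate $\|S_{m+m'}V-S_{a+a'}V\|_\infty\le\|S_mV-S_aV\|_\infty+\|S_{m'}V-S_{a'}V\|_\infty$, and the clopen finite-index subgroups obtained as closures of $\{S_mV:m\in p_1\Z\times\cdots\times p_d\Z\}$ for a periodic approximant with periodicity vector $(p_1,\dots,p_d)$. Your version spells out a few points (well-definedness modulo the period lattice, continuity of inversion, uniqueness via density) that the paper leaves implicit, and you land in the $2\eps$-ball rather than the $\eps$-ball because you took $\|V-W\|_\infty<\eps$ instead of $\eps/2$, but these are cosmetic differences.
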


\begin{proof}
Since $V$ is limit-periodic, we can find for each $\varepsilon > 0$, a periodic $V_p$ with $\|V-V_p\|_\infty < \varepsilon$. Since $\mathrm{orb}(V_p)$ is finite, it follows that $\mathrm{orb}(V)$ is contained in the $\varepsilon$-neighborhood of a finite set. That is, $\mathrm{orb}(V)$ is totally bounded and hence its closure $\mathrm{hull}(V)$ is compact.

Obviously, there is a unique group structure on $\mathrm{orb}(V)$ such that $\Z^d \to \mathrm{orb}(V)$, $m \mapsto S_m V$ is a homomorphism. Our goal is to show that it extends uniquely to a group structure on $\mathrm{hull}(V)$. It suffices to show uniform continuity of the group structure on $\mathrm{orb}(V)$. This will then also show that the resulting extension of the group structure to $\mathrm{hull}(V)$ is Abelian. We have
\begin{align*}
\| S_{m_1 + k_1} V - S_{m_2 + k_2} V \|_\infty &  = \| S_{m_1 - m_2} V - S_{k_2 - k_1} V \|_\infty \\
& \le \| S_{m_1 - m_2} V - V \|_\infty + \| V - S_{k_2 - k_1} V \|_\infty \\
& = \| S_{m_1} V - S_{m_2} V \|_\infty + \| S_{k_1} V - S_{k_2} V \|_\infty .
\end{align*}
Here, the first and the third step follow since translations are isometries and the second step follows from the triangle inequality. Put differently, if $a,b,c,d \in \mathrm{orb}(V)$ and we denote the group operation by $\cdot$, then $\| a \cdot b - c \cdot d\|_\infty \le \|a - c\|_\infty + \| b - d \|_\infty$, which shows the desired uniform continuity.

To prove the last statement about finite index subgroups in small neighborhoods of the identity, let $\varepsilon > 0$ be given. Choose a periodic $V_p \in \ell^\infty(\Z^d)$ with $\| V - V_p \|_\infty < \frac{\varepsilon}{2}$. Also, there are $p_1 , \ldots , p_d \in \Z_+$ such that for all $n = (n_1, \ldots , n_d), k = (k_1 , \ldots , k_d) \in \Z^d$, we have $V_p(n_1 + k_1 p_1 , \ldots, n_d + k_d p_d) = V_p(n_1 , \ldots , n_d)$. In other words, $V_p$ is invariant under $S_m$ for every $m \in (p_1 \Z) \times \cdots \times (p_d \Z)$. Clearly, the closure of $\{ S_m V : m \in (p_1 \Z) \times \cdots \times (p_d \Z) \}$, which we denote by $\mathrm{hull}_p(V)$, is a compact subgroup of $\mathrm{hull}(V)$ of index at most $\prod p_j$. Since $\mathrm{hull}(V)$ is the union of finitely many disjoint translates of $\mathrm{hull}_p(V)$, it follows that $\mathrm{hull}_p(V)$ is also open. By the invariance property of $V_p$, $\mathrm{hull}_p(V)$ is contained in the $\frac{\varepsilon}{2}$-ball around $V_p$, and hence it is contained in the $\varepsilon$-ball around $V$. This completes the proof of the lemma.
\end{proof}

\begin{prop}\label{p.1}
Suppose $V \in \ell^\infty(\Z^d)$ is limit-periodic, but not periodic. Then, there exists a Cantor group $\Omega$ and a minimal $\Z^d$ action by translations, $\{ T^n \}_{n \in \Z^d}$, such that $V(n) = f(T^n \omega_e)$ with the identity element $\omega_e$ of $\Omega$ and a suitable function $f \in C(\Omega,\R)$.
\end{prop}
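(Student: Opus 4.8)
The plan is to take $\Omega := \mathrm{hull}(V)$, equipped with the topological group structure provided by Lemma~\ref{l.lphulls}, in which $V$ is the identity $\omega_e$ and $m \mapsto S_m V$ is a homomorphism of $\Z^d$ into $\Omega$. One then checks, in order, that (i) $\Omega$ is a Cantor group, (ii) it carries a minimal $\Z^d$ action by translations, and (iii) evaluation at the origin is a continuous sampling function that reproduces $V$.

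For (i): $\Omega$ is compact and Abelian by Lemma~\ref{l.lphulls}, and metrizable because it sits inside the metric space $(\ell^\infty(\Z^d),\|\cdot\|_\infty)$. It is infinite, since $V$ is not periodic, so $\mathrm{orb}(V)$ is infinite and $\Omega \supseteq \mathrm{orb}(V)$. Finally, it is totally disconnected: Lemma~\ref{l.lphulls} supplies arbitrarily small compact open subgroups containing $\omega_e$, and their translates by group elements (translations being homeomorphisms) form arbitrarily small compact open neighborhoods of every point of $\Omega$, so any two distinct points of the compact Hausdorff space $\Omega$ can be separated by a clopen set and hence lie in distinct connected components.

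For (ii): set $\alpha_j := S_{-e_j} V \in \Omega$, where $e_1,\ldots,e_d$ is the standard basis of $\Z^d$, and define $T^n \omega := \omega + \sum_{j=1}^d n_j \alpha_j$ as in \eqref{e.taction}; since each $\alpha_j \in \Omega$, this is a well-defined $\Z^d$ action on $\Omega$ by homeomorphisms. Using that $m \mapsto S_m V$ is a homomorphism with $\omega_e = V$, one gets $T^n(S_m V) = S_{m-n} V$, so $T$ implements translations on $\mathrm{orb}(V)$ and extends continuously to $\Omega$. For minimality, the $T$-orbit of $\omega_e$ is $\{S_{-n}V : n \in \Z^d\} = \mathrm{orb}(V)$, which is dense in $\Omega$ by definition of the hull; for arbitrary $\omega \in \Omega$, its $T$-orbit is $\omega + \mathrm{orb}(V)$, the image of a dense set under the homeomorphism $x \mapsto \omega + x$, hence dense.

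For (iii): define $f : \Omega \to \R$ by $f(\omega) = \omega(0)$, the value at $0 \in \Z^d$ of the $\ell^\infty(\Z^d)$-element $\omega$; this is real-valued because every element of $\mathrm{hull}(V)$ is a real-valued sequence, and $|f(\omega) - f(\omega')| \le \|\omega - \omega'\|_\infty$ gives $f \in C(\Omega,\R)$. Then $f(T^n \omega_e) = (S_{-n}V)(0) = V(n)$, as desired. I expect no genuine obstacle here: the argument is essentially bookkeeping on top of Lemma~\ref{l.lphulls}. The only points needing a little care are the verification of total disconnectedness (the sole topological fact not handed over directly by the lemma) and the choice of signs in the definition of the $\alpha_j$ — equivalently, the direction of the shift — so that evaluation at the origin returns $V$ itself rather than its reflection $n \mapsto V(-n)$.
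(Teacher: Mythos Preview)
Your proposal is correct and follows essentially the same route as the paper: take $\Omega=\mathrm{hull}(V)$ with the group structure from Lemma~\ref{l.lphulls}, let the $\Z^d$ action be the shift, and sample by evaluation at $0$. Your write-up is in fact more careful than the paper's in two places: you spell out why $\Omega$ is infinite and totally disconnected (the paper just cites Lemma~\ref{l.lphulls}, which does not state this explicitly), and your choice $\alpha_j=S_{-e_j}V$ gets the sign right so that $f(T^n\omega_e)=(S_{-n}V)(0)=V(n)$, whereas the paper's convention $T^n=S_n$ would literally give $V(-n)$.
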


\begin{proof}
We  may set $\Omega = \mathrm{hull}(V)$, which is a Cantor group by Lemma~\ref{l.lphulls}. The identity element $\omega_e$ is simply $V$ itself. The $\Z^d$ action is given by $T^n = S_n$ with the translations $S_n$ introduced above. Note that this action is indeed an action by translations in the sense of Definition~\ref{d.cgmt}, simply choose $\alpha_j = T^{(0,\cdots, 1, \cdots, 0)}(V)$ with the $j$-th component being $1$.

Let us show that this action is minimal. It suffices to show that for $\omega_1, \omega_2 \in \Omega$ and $\varepsilon > 0$, there is $n \in \Z^d$ such that $\dist (T^n \omega_1 , \omega_2) = \|T^n \omega_1 - \omega_2\|_\infty < \varepsilon$. Since $\Omega = \mathrm{hull}(V)$, we can choose $n_1, n_2 \in \Z^d$ such that $\| \omega_j - T^{n_j} V \|_\infty < \frac{\varepsilon}{2}$, $j = 1,2$. Now set $n := n_2 - n_1$. Putting everything together and using that $T$ is an isometry, we find
\begin{align*}
\|T^n \omega_1 - \omega_2\|_\infty & \le \|T^n \omega_1 - T^{n+n_1} V \|_\infty  + \|T^{n+n_1} V - \omega_2\|_\infty \\
& = \|\omega_1 - T^{n_1} V \|_\infty  + \|T^{n_2} V - \omega_2\|_\infty \\
& < \varepsilon.
\end{align*}

Finally, the continuous function $f$ is simply the evaluation at zero, that is, $f(\omega) = \omega(0)$ (recall that each $\omega$ is a function on $\Z^d$ and may therefore be evaluated at $0 \in \Z^d$). Then, we clearly have the desired formula $V(n) = f(T^n \omega_e)$.
\end{proof}

\begin{definition}
$f \in C(\Omega, \R)$ is called periodic {\rm (}with respect to the $\Z^d$ action $T${\rm )} if for some $p \in (\Z_+)^d$, we have $f(T^p \omega) = f(\omega)$ for every $\omega \in \Omega$.
\end{definition}

\begin{prop}\label{p.denseperiodic}
Suppose $\Omega$ is a Cantor group that admits a minimal $\Z^d$ action by translations, $\{ T^n \}_{n \in \Z^d}$. Then, the set of periodic elements is dense in $C(\Omega, \R)$.
\end{prop}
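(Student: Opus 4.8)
The plan is to approximate a given $f \in C(\Omega,\R)$ by a function that factors through a finite quotient of $\Omega$. Fix $\varepsilon > 0$. Since $f$ is continuous on the compact group $\Omega$, it is uniformly continuous, so exactly as in the proof of Proposition~\ref{p.2} we may choose a compact open neighborhood $U$ of the identity $\omega_e$ with $|f(\omega + \omega_U) - f(\omega)| < \varepsilon$ for all $\omega \in \Omega$ and all $\omega_U \in U$. Again as in that proof, Lemma~\ref{l.stronglyminimal} lets us pick $p_1,\ldots,p_d \in \Z_+$ so that the closed subgroup $\Omega_0 := \overline{\{\sum_{j=1}^d n_j p_j \alpha_j : n \in \Z^d\}}$ is contained in $U$; moreover $\Omega_0$ has index at most $\prod_{j} p_j$ in $\Omega$, and a closed subgroup of finite index in a compact group is automatically open (the group is a finite disjoint union of translates of it), so $\Omega_0$ is clopen.

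Next I would build the periodic approximant. The quotient $\Omega/\Omega_0$ is a finite group, and since $\Omega_0$ is clopen, every coset is clopen. Choose coset representatives $\eta_1,\ldots,\eta_N \in \Omega$ (one per coset, $N = [\Omega:\Omega_0]$), and define $f_p \in C(\Omega,\R)$ by setting $f_p \equiv f(\eta_i)$ on the coset $\eta_i + \Omega_0$. Because $\Omega$ is partitioned into the finitely many clopen sets $\eta_i + \Omega_0$, the function $f_p$ is continuous. It is also periodic with periodicity vector $p = (p_1,\ldots,p_d)$: for any $\omega$, the point $T^p\omega = \omega + \sum_{j} p_j\alpha_j$ lies in the same $\Omega_0$-coset as $\omega$ (since $\sum_j p_j\alpha_j \in \Omega_0$), hence $f_p(T^p\omega) = f_p(\omega)$.

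Finally I would estimate the error. Given $\omega \in \Omega$, let $\eta_i + \Omega_0$ be the coset containing it, so that $\omega - \eta_i \in \Omega_0 \subseteq U$. Applying the defining property of $U$ with base point $\eta_i$ and increment $\omega - \eta_i$ gives $|f(\omega) - f_p(\omega)| = |f(\eta_i + (\omega - \eta_i)) - f(\eta_i)| < \varepsilon$. Taking the supremum over $\omega \in \Omega$ yields $\|f - f_p\|_\infty \le \varepsilon$, and since $\varepsilon > 0$ was arbitrary and each $f_p$ is periodic, the periodic elements are dense in $C(\Omega,\R)$.

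I do not anticipate a genuine obstacle here: the only point requiring care is the continuity of $f_p$, which is exactly where total disconnectedness of $\Omega$ enters — it forces the finite-index subgroup $\Omega_0$ to be clopen rather than merely closed, making a function that is locally constant on the cosets of $\Omega_0$ continuous. (On a connected group the construction would collapse, producing only constants.)
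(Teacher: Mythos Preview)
Your argument is correct and follows the same overall scheme as the paper: pick a small clopen finite-index subgroup $\Omega_0$ and approximate $f$ by a function that is constant on its cosets. The difference lies only in how the coset-constant approximant is produced. You choose coset representatives $\eta_1,\ldots,\eta_N$ and set $f_p$ equal to $f(\eta_i)$ on $\eta_i + \Omega_0$; the paper instead averages over $\Omega_0$ with respect to its Haar measure, $f_p(\omega)=\int_{\Omega_0} f(\omega+\omega_0)\,d\mu_0(\omega_0)$. Your version is a bit more elementary (no appeal to Haar measure) but depends on the arbitrary choice of representatives; the paper's averaging is canonical and linear in $f$, which can be convenient if one wants an explicit approximation operator. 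Either construction yields a continuous function constant on cosets of $\Omega_0$, and since each $p_j\alpha_j$ lies in $\Omega_0$, both approximants are periodic with periodicity vector $(p_1,\ldots,p_d)$.
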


\begin{proof}
Given $\tilde f \in C(\Omega, \R)$ and $\varepsilon > 0$, we have to find a periodic $f \in C(\Omega, \R)$ with $\|f - \tilde f\|_\infty < \varepsilon$. Since $\tilde f$ is uniformly continuous, there is $\delta > 0$ such that $\dist (\omega_1,\omega_2) < \delta$ implies $|\tilde f(\omega_1) - \tilde f(\omega_2)| < \varepsilon$.

By Proposition~\ref{p.2} and Lemma~\ref{l.lphulls}, we can choose a compact subgroup $\Omega_0$ of $\Omega$ that has finite index and that is contained in the $\delta$-neighborhood of the identity element. Denote the Haar measure on $\Omega_0$ by $\mu_0$ and define
$$
f(\omega) = \int_{\Omega_0} \tilde f(\omega + \omega_0) \, d\mu_0(\omega_0).
$$
By construction, we have $\|f - \tilde f\|_\infty < \varepsilon$. Moreover, $f$ is constant on $\Omega_0$ and also on each of its cosets. This implies that $f$ is periodic.
\end{proof}

We conclude this section with two more observations.

\begin{prop} \label{prop:quot}
Suppose we are given a Cantor group $\Omega$ with a minimal $\Z^d$ action $T$ by translations. Then, for any $f \in C(\Omega,\R)$, $\mathrm{hull}(F(\omega_e))$ is a quotient group of $\Omega$ {\rm (}here, $F(\omega_e) = (f(T^n \omega_e ))_{n \in \Z^d}${\rm )}.
\end{prop}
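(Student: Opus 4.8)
The plan is to exhibit an explicit quotient map. Write $V := F(\omega_e) = (f(T^n\omega_e))_{n\in\Z^d}$. By Proposition~\ref{p.2}, $V$ is limit-periodic, so by Lemma~\ref{l.lphulls} the compact set $\mathrm{hull}(V)$ carries a unique topological group structure in which $V$ is the identity element and $m\mapsto S_m V$ is a homomorphism; write $\cdot$ for this (Abelian) group operation. I would then produce a continuous surjective group homomorphism $\pi\colon\Omega\to\mathrm{hull}(V)$. Granting this, $\ker\pi$ is a closed subgroup of $\Omega$, and since $\Omega$ is compact while $\mathrm{hull}(V)$ is Hausdorff, the induced continuous bijection $\Omega/\ker\pi\to\mathrm{hull}(V)$ is automatically a topological group isomorphism; this is exactly the assertion that $\mathrm{hull}(F(\omega_e))$ is a quotient group of $\Omega$.

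The natural candidate is $\pi(\omega) := F(\omega) = (f(T^n\omega))_{n\in\Z^d}$. By the second part of Proposition~\ref{p.2}, $\mathrm{hull}(V) = \{F(\tilde\omega):\tilde\omega\in\Omega\}$, so $\pi$ indeed takes values in $\mathrm{hull}(V)$ and is onto. To see that $\pi$ is continuous I would reuse the estimate from the beginning of the proof of Proposition~\ref{p.2}: given $\eps>0$, choose a compact open neighborhood $U$ of $\omega_e$ with $|f(\eta+\omega_U)-f(\eta)|<\eps$ for all $\eta\in\Omega$ and all $\omega_U\in U$. The crucial point is that the action is \emph{by translations}, so $T^n(\omega+\omega_U)=T^n\omega+\omega_U$ for every $n\in\Z^d$; hence whenever $\omega'-\omega\in U$,
\[
\|\pi(\omega')-\pi(\omega)\|_\infty=\sup_{n\in\Z^d}\bigl|f(T^n\omega+(\omega'-\omega))-f(T^n\omega)\bigr|\le\eps.
\]
Since the map $\omega'\mapsto\omega'-\omega$ is continuous and carries $\omega$ to $\omega_e\in U$, this shows $\pi$ is continuous at every point of $\Omega$.

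It remains to verify that $\pi$ is a homomorphism. I would first check this on the subgroup $\mathrm{orb}_T(\omega_e):=\{T^n\omega_e:n\in\Z^d\}$, which is dense in $\Omega$ by minimality. There one computes $\pi(T^n\omega_e)(k)=f(T^{n+k}\omega_e)=V(n+k)=(S_{-n}V)(k)$, so $\pi(T^n\omega_e)=S_{-n}V$; and since $T^n\omega_e+T^m\omega_e=T^{n+m}\omega_e$ in $\Omega$ while $S_{-n}V\cdot S_{-m}V=S_{-(n+m)}V$ in $\mathrm{hull}(V)$, the restriction of $\pi$ to $\mathrm{orb}_T(\omega_e)$ is a homomorphism onto $\mathrm{orb}(V)$. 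I would then extend this to all of $\Omega$ by density: approximating arbitrary $\omega,\omega'\in\Omega$ by sequences $\omega_k,\omega_k'\in\mathrm{orb}_T(\omega_e)$ and using the continuity of $\pi$ together with the continuity of both group operations (the one on $\mathrm{hull}(V)$ being provided by Lemma~\ref{l.lphulls}), I pass to the limit in $\pi(\omega_k+\omega_k')=\pi(\omega_k)\cdot\pi(\omega_k')$ to obtain $\pi(\omega+\omega')=\pi(\omega)\cdot\pi(\omega')$ in general.

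The only step that requires any care is the continuity of $\pi$, and even that reduces at once to the translation identity $T^n(\omega+\omega_U)=T^n\omega+\omega_U$ and the uniform-continuity estimate already established for Proposition~\ref{p.2}; everything else is soft compactness-and-density topology. (Note that the statement holds verbatim in the degenerate case where $V$ is periodic: then $\mathrm{hull}(V)$ is finite and $\ker\pi$ is an open finite-index subgroup of $\Omega$.)
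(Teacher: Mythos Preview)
Your proposal is correct and follows essentially the same route as the paper: define $\pi(\omega)=F(\omega)$, invoke Proposition~\ref{p.2} for surjectivity, and conclude via the first isomorphism theorem once $\pi$ is known to be a continuous homomorphism. The paper dispatches the homomorphism and continuity verifications in a sentence each (asserting $F(\omega_1)\cdot F(\omega_2)=F(\omega_1+\omega_2)$ from $F(T^n\omega_e)=S_n F(\omega_e)$, and deducing continuity from compactness of $\Omega$ and continuity of $f$), whereas you spell out the density-plus-continuity extension and the uniform-continuity estimate explicitly; your level of detail is higher, but the argument is the same.
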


\begin{proof}
Define $\phi$ by $\phi: \Omega \longrightarrow \mathrm{hull}(F(\omega_e)), \phi(\omega) = F(\omega).$ It is not hard to see that the group structure of $\mathrm{hull}(F(\omega_e))$ is like $F(\omega_1) \cdot F(\omega_2) = F(\omega_1 + \omega_2)$, since $F(T^n \omega_e ) = S_n (F(\omega_e))$. It follows that $\phi$ is a group homomorphism. By Proposition~\ref{p.2}, $\phi$ is surjective. The continuity of $\phi$ follows from the compactness of $\Omega$ and the continuity of $f$. So we have
$$
\mathrm{hull}(F(\omega_e)) \cong \Omega/\mathrm{ker}(\phi),
$$
implying the lemma.
\end{proof}

\begin{prop}\label{prop:iso}
There exists some $f \in C(\Omega,\R)$ such that $\mathrm{hull}(F(\omega_e)) \cong \Omega$.
\end{prop}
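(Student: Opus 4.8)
The plan is to exhibit an explicit $f$ for which the homomorphism $\phi$ constructed in the proof of Proposition~\ref{prop:quot} is injective. Recall from there that $\phi \colon \Omega \to \mathrm{hull}(F(\omega_e))$, $\phi(\omega) = F(\omega) = (f(T^n \omega))_{n \in \Z^d}$, is a continuous surjective group homomorphism and that $\mathrm{hull}(F(\omega_e)) \cong \Omega / \mathrm{ker}(\phi)$. Since $\Omega$ (and hence $\Omega/\mathrm{ker}(\phi)$) is compact and $\mathrm{hull}(F(\omega_e))$ is Hausdorff, a continuous bijective homomorphism between them is automatically a homeomorphism, hence a topological group isomorphism. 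So it suffices to produce $f \in C(\Omega,\R)$ with $\mathrm{ker}(\phi) = \{\omega_e\}$.

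First I would identify $\mathrm{ker}(\phi)$ concretely. By Lemma~\ref{l.lphulls}, the identity element of $\mathrm{hull}(F(\omega_e))$ is $F(\omega_e)$ itself, so $\omega \in \mathrm{ker}(\phi)$ if and only if $F(\omega) = F(\omega_e)$, i.e.\ $f\bigl(\omega + \sum_{j} n_j \alpha_j\bigr) = f\bigl(\sum_j n_j \alpha_j\bigr)$ for every $n \in \Z^d$. By minimality of $T$, the orbit $\{\sum_j n_j \alpha_j : n \in \Z^d\}$ of $\omega_e$ is dense in $\Omega$, so by continuity of $f$ this is equivalent to $f(\omega + \eta) = f(\eta)$ for all $\eta \in \Omega$. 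In other words, $\mathrm{ker}(\phi)$ is exactly the closed subgroup of "periods" of $f$, and the problem reduces to finding a continuous real-valued function on $\Omega$ whose only period is $\omega_e$.

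Then I would simply take $f(\omega) := \dist(\omega, \omega_e)$, where $\dist$ is the fixed metric on $\Omega$. This $f$ is real-valued and continuous (in fact $1$-Lipschitz). If $\omega_0$ is a period, then evaluating the identity $f(\eta + \omega_0) = f(\eta)$ at $\eta = \omega_e$ yields $\dist(\omega_0, \omega_e) = \dist(\omega_e, \omega_e) = 0$, whence $\omega_0 = \omega_e$. Thus $\mathrm{ker}(\phi)$ is trivial, $\phi$ is a continuous bijective homomorphism, and therefore an isomorphism $\Omega \cong \mathrm{hull}(F(\omega_e))$. There is no serious obstacle here: the only points needing care are the identification of $\mathrm{ker}(\phi)$ with the period subgroup (where minimality enters, through density of the orbit of the identity together with continuity of $f$) and the standard fact that a continuous bijective homomorphism out of a compact group is a topological isomorphism; the one small observation is that the distance-to-the-identity function cannot be invariant under any nontrivial translation.
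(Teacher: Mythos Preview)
Your proof is correct and takes essentially the same approach as the paper: both choose $f(\omega) = \dist(\omega,\omega_e)$ and argue that $\mathrm{ker}(\phi)$ is trivial because $F(\omega)=F(\omega_e)$ forces $\dist(\omega,\omega_e)=0$. Your detour through minimality to identify $\mathrm{ker}(\phi)$ with the period subgroup is correct but unnecessary---the paper simply reads off the $n=0$ coordinate of $F(\omega)=F(\omega_e)$ directly.
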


\begin{proof}
It suffices to prove that there exists some $f \in C(\Omega,\R)$ such that $\mathrm{ker}(\phi) = \{ \omega_e \}$. Define the function $f: \Omega \to \R$ by $f(\omega) = \dist (\omega_e,\omega)$. Clearly, $f$ is continuous, so there is an associated $F$ (defined as in Proposition~\ref{prop:quot}) $: \Omega \to \ell^{\infty}(\Z^d)$ such that $\mathrm{hull}(F(\omega_e))$ is a quotient group of $\Omega$. Consider $\phi: \Omega \longrightarrow \mathrm{hull}(F(\omega_e)), \phi(\omega) = F(\omega)$. If $F(\omega) = F(\omega_e)$, then $f(\omega) = f(\omega_e)$, that is, $\mathrm{dist}(\omega_e,\omega) = \mathrm{dist}(\omega_e,\omega_e) = 0$, implying $\omega = \omega_e$ and $\mathrm{ker}(\phi) = \{ \omega_e \}$.
\end{proof}

\section{Construction of Distal Limit-Periodic Elements in $\ell^\infty(\Z^d)$} \label{sec:distal}

\begin{definition}
A function $Q(x) : [0,\infty)\to [1,\infty)$ is called an approximation function if both
$$
q(t) = t^{-4} \sup_{x \ge 0} Q(x) e^{-tx}
$$
and
\begin{equation} \label{equ:distalh}
h(t) = \inf_{\kappa_t} \prod^{\infty}_{i = 0} q(t_i)^{2^{-i-1}}
\end{equation}
are finite for every $t > 0$. In \eqref{equ:distalh}, $\kappa_t$ denotes the set of all sequences
$t \ge t_1 \ge t_2 \ge \cdots \ge 0$ with $\sum{t_i} \leq t$.
\end{definition}

\begin{definition}
A sequence $V \in \ell^\infty(\Z^d)$ is called distal if for some approximation function $Q$, we have
$$
\inf_{i \in \Z^d} |V_{i} - V_{i+k}| \ge Q(| k |)^{-1}
$$
for every $ k \in \Z^d \setminus \{0\}$.
\end{definition}

\begin{prop}
If $V \in \ell^{\infty}(\Z^d)$ is distal, then every $\tilde{V} \in \mathrm{hull}(V)$ is also distal.
\end{prop}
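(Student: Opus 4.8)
The plan is to show that the defining inequality for distality passes to the closure with the \emph{same} approximation function $Q$. Throughout, write $\mathrm{orb}(V) = \{ S_m V : m \in \Z^d \}$, so that $\mathrm{hull}(V) = \overline{\mathrm{orb}(V)}$, the closure being taken in $\ell^\infty(\Z^d)$, and let $Q$ be an approximation function witnessing the distality of $V$.

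First I would check that every element of the orbit is distal with this same $Q$. For $W = S_m V$ we have $W_i = V_{i-m}$, hence $|W_i - W_{i+k}| = |V_{i-m} - V_{i+k-m}|$; reindexing the infimum over $i \in \Z^d$ by $j = i-m$ gives $\inf_{i \in \Z^d} |W_i - W_{i+k}| = \inf_{j \in \Z^d} |V_j - V_{j+k}| \ge Q(|k|)^{-1}$ for every $k \in \Z^d \setminus \{0\}$.

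Next, for each fixed $k \in \Z^d \setminus \{0\}$ consider the functional $G_k : \ell^\infty(\Z^d) \to [0,\infty)$ given by $G_k(W) = \inf_{i \in \Z^d} |W_i - W_{i+k}|$. Using $\bigl| |x| - |y| \bigr| \le |x-y|$ coordinatewise together with the elementary bound $|\inf_i a_i - \inf_i b_i| \le \sup_i |a_i - b_i|$, one gets $|G_k(W) - G_k(W')| \le 2\|W - W'\|_\infty$, so $G_k$ is Lipschitz continuous on $\ell^\infty(\Z^d)$. By the previous paragraph $G_k \ge Q(|k|)^{-1}$ on the dense subset $\mathrm{orb}(V)$ of $\mathrm{hull}(V)$, so by continuity $G_k(\tilde V) \ge Q(|k|)^{-1}$ for every $\tilde V \in \mathrm{hull}(V)$. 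As $k \neq 0$ was arbitrary, every $\tilde V \in \mathrm{hull}(V)$ satisfies $\inf_{i \in \Z^d} |\tilde V_i - \tilde V_{i+k}| \ge Q(|k|)^{-1}$ for all $k \in \Z^d \setminus \{0\}$ with the \emph{same} approximation function $Q$, which is precisely the statement that $\tilde V$ is distal.

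There is no serious obstacle here: the argument is entirely soft, resting on the density of the orbit in the hull and the norm-continuity of the functionals $G_k$. The only step needing a line of care is the Lipschitz estimate for $G_k$, i.e.\ that taking an infimum over the lattice does not destroy continuity; this follows at once from the two elementary inequalities quoted above. (Alternatively, one can argue sequentially, since $\ell^\infty(\Z^d)$ is metric: if $\tilde V = \lim_n S_{m_n} V$, then for fixed $i$ and $k$ one has $|\tilde V_i - \tilde V_{i+k}| = \lim_n |V_{i-m_n} - V_{i+k-m_n}| \ge Q(|k|)^{-1}$, and taking the infimum over $i$ completes the proof.)
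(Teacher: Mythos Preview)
Your argument is correct and is exactly what the paper has in mind: the paper's own proof is the single line ``This follows readily from the definition,'' and you have simply spelled out that observation---shift-invariance of the distal lower bound plus passage to the $\ell^\infty$-closure via the (Lipschitz) continuity of $W \mapsto \inf_i |W_i - W_{i+k}|$.
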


\begin{proof}
This follows readily from the definition.
\end{proof}

We will construct a distal sequence in our framework, which is a key step to get our main result.

\begin{lemma} \label{lem:distal}
There exist a Cantor group $\Omega$ that admits a minimal $\Z^d$ action $T$ by translations and an $f \in C(\Omega, \R)$ such that $(f(T^n \omega_e ))_{n \in \Z^d}$ is a distal sequence.
\end{lemma}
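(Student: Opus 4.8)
The plan is to take $\Omega$ to be the $d$-fold product $\Z_2^d$ of the $2$-adic integers, with the $\Z^d$-action by translation determined by $\alpha_j=e_j$, so that $T^n\omega=\omega+n$ under the dense embedding $\iota\colon\Z^d\hookrightarrow\Z_2^d$. This $\Omega$ is a Cantor group in the sense of Definition~\ref{d.cgmt}, and the action is minimal since every orbit is a coset of the dense subgroup $\iota(\Z^d)$. Writing $\omega=(\omega_1,\dots,\omega_d)$ with $2$-adic digit expansions $\omega_j=\sum_{i\ge0}b_i^{(j)}2^i$, $b_i^{(j)}\in\{0,1\}$, I would define
\[
f(\omega)=\sum_{i\ge0}a_i\,c\bigl(b_i^{(1)},\dots,b_i^{(d)}\bigr),
\]
where $c\colon\{0,1\}^d\to\R$ is \emph{injective} (for instance $c(x)=\sum_{j=1}^d 2^{-j}x_j$) and $a_i=\theta^i$ for a small $\theta\in(0,1)$ to be fixed below; then $f\in C(\Omega,\R)$ because the digit functions are locally constant and $\sum_i a_i<\infty$. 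The reason for bundling all $d$ coordinates into a single sum over digit-levels, rather than using the naive ansatz $f=\sum_j\lambda_j g(\omega_j)$, is that the latter is ruined by cancellation — a tiny difference in one coordinate can be swamped by an order-one difference in the next — whereas in the bundled form the \emph{globally lowest active digit-level} controls the entire difference, and that level is at most logarithmic in $|k|$.

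The heart of the argument is the following estimate. Fix $k\in\Z^d\setminus\{0\}$ and let $v^\ast=\min_{1\le j\le d}v_2(k_j)$, where $v_2$ is the $2$-adic valuation and $v_2(0):=\infty$; then $v^\ast<\infty$. For every $\omega\in\Omega$: since $2^{v^\ast}\mid k_j$ for all $j$, adding $\iota(k)$ leaves every digit below level $v^\ast$ unchanged, so the terms $i<v^\ast$ of $f$ are unaffected; at level $v^\ast$ the digit vector changes by a nonzero element of $\{0,1\}^d$ — exactly the coordinates $j$ with $v_2(k_j)=v^\ast$ have their $v^\ast$-th digit flipped, with no carry into that position — so that term changes by at least $a_{v^\ast}\delta_c$ in absolute value, where $\delta_c>0$ is the minimal gap between distinct values of $c$; and the remaining terms $i>v^\ast$ contribute at most $\mathrm{osc}(c)\sum_{i>v^\ast}a_i$ in absolute value, with $\mathrm{osc}(c)=\max c-\min c$. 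Choosing $\theta$ small enough that $\mathrm{osc}(c)\sum_{i>s}a_i\le\tfrac12\delta_c\,a_s$ for every $s$ (for geometric $a_i=\theta^i$ this amounts to $\theta/(1-\theta)\le\delta_c/(2\,\mathrm{osc}(c))$, which pins down an admissible $\theta$), we obtain $|f(\omega+\iota(k))-f(\omega)|\ge\tfrac12\delta_c\,a_{v^\ast}$ for every $\omega$. Since any $j_0$ with $k_{j_0}\ne0$ forces $2^{v^\ast}\le2^{v_2(k_{j_0})}\le|k_{j_0}|\le|k|$, hence $v^\ast\le\lfloor\log_2|k|\rfloor$, and the $a_i$ decrease, we get
\[
\inf_{n\in\Z^d}\bigl|V_n-V_{n+k}\bigr|\;\ge\;\tfrac12\delta_c\,\theta^{\lfloor\log_2|k|\rfloor},\qquad V:=\bigl(f(T^n\omega_e)\bigr)_{n\in\Z^d}.
\]

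It then remains to check that $Q(x):=2\delta_c^{-1}\theta^{-\lfloor\log_2 x\rfloor}$, extended arbitrarily on $[0,1)$ so that $Q\colon[0,\infty)\to[1,\infty)$, is an approximation function. Because $\theta^{-\lfloor\log_2 x\rfloor}\le x^{\gamma}$ with $\gamma:=\log_2(1/\theta)$ for $x\ge1$, we have $Q(x)\le C_1+C_2x^{\gamma}$, so $q(t)=t^{-4}\sup_{x\ge0}Q(x)e^{-tx}\le C_1t^{-4}+C_2(\gamma/(et))^{\gamma}t^{-4}<\infty$ for all $t>0$; and $h(t)<\infty$ follows by evaluating the infimum in \eqref{equ:distalh} at the admissible sequence $t_i=t\,2^{-i-1}$, for which $\sum_{i\ge0}2^{-i-1}\log q(t_i)$ is a convergent series. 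Thus $V$ is distal with approximation function $Q$, proving the lemma. The only real obstacle is the design just described: to turn the vague statement ``$V$ varies a lot'' into an honest lower bound for $|V_n-V_{n+k}|$ with merely a logarithmic loss in $|k|$, one must arrange that the leading contribution to this difference sits at a single, identifiable digit-level that no other coordinate or higher level can cancel — which is precisely what the single-sum structure with injective $c$ and geometrically decaying coefficients $a_i$ guarantees.
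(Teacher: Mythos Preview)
Your argument is correct and takes a genuinely different route from the paper's. The paper builds a limit-periodic $V\in\ell^\infty(\Z^d)$ first, as
\[
V(t)=\sum_{i=1}^d\sum_{v\ge1}\frac{a_v^{(i)}(t_i)}{D_v},\qquad D_v=\prod_{j=1}^d\bigl(n_{v-1}^{(j)}\bigr)^2 n_v^{(j)},
\]
where $a_v^{(i)}(p)$ is the residue of $p$ modulo $n_v^{(i)}$ and the $n_v^{(j)}$ satisfy divisibility and growth constraints; distality is obtained by noting that $D_kV_k(t)\in\Z$, so the truncated difference $V_k(t)-V_k(h)$ is either zero or at least $1/D_k$, while the tail is dominated by this via the growth hypotheses. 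Only afterwards is the Cantor group produced as $\Omega=\mathrm{hull}(V)$ through Proposition~\ref{p.1}. You instead fix $\Omega=\Z_2^d$ at the outset and write $f$ directly in $2$-adic digits, with the injective bundling map $c$ playing the role that the common-denominator integrality trick plays in the paper. Your distality estimate is more transparent --- a single identifiable dominant digit-level rather than an integer argument on the entire truncation --- and the Cantor group and sampling function come for free; the paper's construction, on the other hand, is more flexible in that it accommodates fairly arbitrary period sequences $(n_v^{(j)})$ rather than the fixed dyadic scale, which can be useful if one wants to tune the exponent of the resulting $Q$. Both routes yield a polynomial approximation function, and the verification that such $Q$ is admissible is identical (and in the paper is delegated to \cite[Remark~4.5]{dg3}).
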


\begin{proof}
Choose a collection $\Gamma = \{(n^{(1)}_{v}, n^{(2)}_{v},\cdots, n^{(d)}_{v}): v \in \Z_+ \}$ for which we have $\left (n^{(i)}_{v}\right)^2 \leq n^{(i)}_{v+1} \leq \left (n^{(i)}_v \right)^{2m}$, $\ n^{(i)}_{v} | n^{(i)}_{v+1}$
and $n^{(i)}_{v} \leq C n^{(j)}_{v}$ for some positive integers $m$ and $C$, every $1 \leq i, j \leq d$ and every $v \in \Z_+$.

Define
$$
a^{(i)}_v(p) = q,\quad p = q \mod n^{(i)}_v,
$$
and then define a $d$-dimensional limit-periodic potential as
$$
V(t) = \sum^d_{i = 1} \sum^{\infty}_{v = 1} \frac{a^{(i)}_v(t_i)}{\left(n^{(1)}_{v-1}\right)^2n^{(1)}_v \left(n^{(2)}_{v-1}\right)^2n^{(2)}_v \cdots \left(n^{(d)}_{v-1}\right)^2n^{(d)}_v},
$$
where $t = (t_1, t_2, \cdots, t_d) \in \Z^d$ and $n^{(i)}_0 = 1, 1 \leq i \leq d$. We will show that $V$ is distal.

Write $V_k(t) = \sum^d_{i = 1}  \sum^{k}_{v = 1} \frac{a^{(i)}_v(t_i)}{\left(n^{(1)}_{v-1}\right)^2n^{(1)}_v \left( n^{(2)}_{v-1} \right)^2 n^{(2)}_v \cdots \left(n^{(d)}_{v-1}\right)^2n^{(d)}_v}.$ Clearly, $V_k$ is periodic and $\lim_{k \to \infty} V_k = V$ uniformly. Given $t \neq h \in \Z^d$, we assume that $|t - h| = |t_1 - h_1|$( $|t_i - h_i| \leq |t_1 - h_1|$ for $1 \leq i \leq d$ ). Fix $k$ so that $n^{(1)}_{k-1} \leq |t - h| < n^{(1)}_k.$

If $k \ge 2$, we have
\begin{align*}
|V_k(t) - V_k(h)| \ge \frac{1}{\left(n^{(1)}_{k-1}\right)^2n^{(1)}_k
\left(n^{(2)}_{k-1}\right)^2n^{(2)}_k \cdots \left(n^{(d)}_{k-1}\right)^2 n^{(d)}_k}.
\end{align*}
We also have
\begin{align*}
& |(V(t)-V_k(t)) - (V(h)-V_k(h))| \\
= & \left|\sum^d_{i = 1}  \sum^{\infty}_{v = k+1} \frac{a^{(i)}_v(t_i) - a^{(i)}_v(h_i)}{\left(n^{(1)}_{v-1}\right)^2n^{(1)}_v
\left(n^{(2)}_{v-1}\right)^2n^{(2)}_v \cdots \left(n^{(d)}_{v-1}\right)^2n^{(d)}_v} \right|\\
\leq & dn^{(1)}_k \sum^{\infty}_{v = k+1} \frac{1}{\left(n^{(1)}_{v-1}\right)^2n^{(1)}_v
\left(n^{(2)}_{v-1}\right)^2n^{(2)}_v \cdots \left(n^{(d)}_{v-1}\right)^2n^{(d)}_v} \\
\leq &  \frac{2dn^{(1)}_k}{\left(n^{(1)}_{k}\right)^2n^{(1)}_{k+1}
\left(n^{(2)}_{k}\right)^2n^{(2)}_{k+1} \cdots \left(n^{(d)}_{k}\right)^2 n^{(d)}_{k+1}}\\
\leq & \frac{1}{2 \left(n^{(1)}_{k-1}\right)^2n^{(1)}_k
\left(n^{(2)}_{k-1}\right)^2n^{(2)}_k \cdots \left(n^{(d)}_{k-1}\right)^2 n^{(d)}_k}.
\end{align*}
So we have
\begin{align*}
& |V(t) - V(h)|\\ = & |V_k(t) - V_k(h) + (V(t)-V_k(t)) - (V(h)-V_k(h))|\\
\ge & \frac{1}{\left(n^{(1)}_{k-1}\right)^2n^{(1)}_k
\left(n^{(2)}_{k-1}\right)^2n^{(2)}_k \cdots \left(n^{(d)}_{k-1}\right)^2 n^{(d)}_k}\\
 & - \frac{1}{2 \left(n^{(1)}_{k-1}\right)^2n^{(1)}_k
\left(n^{(2)}_{k-1}\right)^2n^{(2)}_k \cdots \left(n^{(d)}_{k-1}\right)^2 n^{(d)}_k} \\
=& \frac{1}{2 \left(n^{(1)}_{k-1}\right)^2n^{(1)}_k
\left(n^{(2)}_{k-1}\right)^2n^{(2)}_k \cdots \left(n^{(d)}_{k-1}\right)^2 n^{(d)}_k} \\
\ge & \frac{1}{2 \left(n^{(1)}_{k-1}\right)^{2+2m}
\left(n^{(2)}_{k-1}\right)^{2+2m} \cdots \left(n^{(d)}_{k-1}\right)^{2+2m}}\\
\ge & \frac{1}{2 \prod^d_{j = 2} \left( C^{2+2m} \left(n^{(1)}_{k-1}\right)^{2d + 2 dm} \right) }\\
\ge & \frac{1}{2 C^{2d+2dm} |t - h|^{2d + 2 dm}}.
\end{align*}

If $k=1$, we have
\begin{align*}
|V_1(t) - V_1(h)| \ge \frac{1}{\left(n^{(1)}_{0}\right)^2n^{(1)}_1
\left(n^{(2)}_{0}\right)^2n^{(2)}_1 \cdots \left(n^{(d)}_{0}\right)^2 n^{(d)}_1}.
\end{align*}
We also have
\begin{align*}
& |(V(t)-V_1(t)) - (V(h)-V_1(h))| \\
=& \left|\sum^d_{i = 1}  \sum^{\infty}_{v = 2} \frac{a^{(i)}_v(t_i) - a^{(i)}_v(h_i)}{\left(n^{(1)}_{v-1}\right)^2n^{(1)}_v \left(n^{(2)}_{v-1}\right)^2n^{(2)}_v \cdots \left(n^{(d)}_{v-1}\right)^2n^{(d)}_v} \right|\\
\leq & \frac{2 n_1}{\left(n^{(1)}_1\right)^2n^{(1)}_2 \left(n^{(2)}_1\right)^2n^{(2)}_2 \cdots \left(n^{(d)}_1\right)^2n^{(d)}_2}.
\end{align*}
So we have
\begin{align*}
& |V(t) - V(h)|\\ = & |V_k(t) - V_k(h) + (V(t)-V_k(t)) - (V(h)-V_k(h))|\\
\ge & \frac{1}{\left(n^{(1)}_{0}\right)^2n^{(1)}_1
\left(n^{(2)}_{0}\right)^2n^{(2)}_1 \cdots \left(n^{(d)}_{0}\right)^2 n^{(d)}_1} \\ &-  \frac{2 n_1}{\left(n^{(1)}_1\right)^2n^{(1)}_2 \left(n^{(2)}_1\right)^2n^{(2)}_2 \cdots \left(n^{(d)}_1\right)^2n^{(d)}_2} \\
\ge & \frac{1}{2 \left(n^{(1)}_{0}\right)^2n^{(1)}_1 \left(n^{(2)}_{0}\right)^2n^{(2)}_1 \cdots \left(n^{(d)}_{0}\right)^2 n^{(d)}_1}\\
\ge & \frac{1}{2 \prod^d_{j=2}\left( C\left(n^{(1)}_1\right)^d \right ) } \\
\ge & \frac{1}{2 C^{2d+2dm} \left(n^{(1)}_1\right)^{2d + 2 dm}}.
\end{align*}

Thus, we have
$$
|V(t) - V(h)| \ge \begin{cases} \frac{1}{2  C^{2d+2dm} \left(n^{(1)}_1\right)^{2d + 2 dm}}, & 0 < |t-h| < n^{(1)}_1; \\
\frac{1}{2 C^{2d+2dm} |t - h|^{2d + 2 dm}}, & |t -h| \ge n^{(1)}_1.
\end{cases}
$$

Similarly, if $|t - h| = |t_i - h_i|, 1 \leq i \leq d$, we have
$$
|V(t) - V(h)| \ge \begin{cases} \frac{1}{2 C^{2d +2dm} \left(n^{(i)}_1\right)^{2d + 2 dm}}, & 0 < |t-h| < n^{(i)}_1; \\
\frac{1}{2 C^{2d+2dm} |t - h|^{2d + 2 dm}}, & |t -h| \ge n^{(i)}_1.
\end{cases}
$$

Let $M = \max \{n^{(i)}_1 : 1 \leq i \leq d \}$ and
$$
Q(x) = \begin{cases} 2 C^{2d+2dm} M^{2d + 2 dm}, & 0 \leq x < M; \\
2 C^{2d+2dm} x^{2d + 2 dm}, & x \ge M.
\end{cases}
$$
$Q(x)$ is an approximation function (please refer to \cite[Remark 4.5]{dg3}). We have that
$$
|V(t)-V(h)| \ge \frac{1}{Q(|t-h|)}, t \neq h,
$$
which implies that $V$ is distal.

Let $\Omega = \mathrm{hull}(V)$. By Proposition~\ref{p.1}, there exists $f \in C(\Omega, \R)$ such that $f(T^n \omega_e ) = V(n)$, which concludes the proof.
\end{proof}

\section{P\"oschel's Results} \label{sec:poeschel}

In this section we rewrite some of P\"oschel's results from \cite{p}, tailored to our purpose.

Let $\mathfrak{M}$ a Banach algebra of real $d$-dimensional sequences $V = (V_i)_{i \in \Z^d}$ with the operations of pointwise addition and multiplication of sequences. In particular, the constant sequence $1$ is supposed to belong to $\mathfrak{M}$ and have norm one. Moreover, $\mathfrak{M}$ is required to be invariant under translation: if $V \in \mathfrak{M}$, then $\| S_k V \|_{\mathfrak{M}} = \| V \|_{\mathfrak{M}}$ for all $k \in \Z^d$, where as before $S_k V_i = V_{i - k}$.

We denote by $M$ the space of all matrices $A = (V_{i,j})_{i,j \in \Z^d}$ satisfying $A_k = (V_{i, i+k}) \in \mathfrak{M}$, $k \in \Z^d$, that is, $A_k$ is the $k$-th diagonal of $A$ and it is required to belong to $\mathfrak{M}$. In $M$, we define a Banach space
$$
M^s = \{A \in M : \p A\p_s < \infty\},\quad 0 \leq s \leq \infty,
$$
where
$$
\| A\|_s = \sup_{k \in \Z^d} \| A_k\|_{\mathfrak{M}} e^{|k|s}.
$$
Obviously,
$$
M^s \subset M^t,\quad \p \cdot \p_s \ge \p\cdot \p_t,\quad 0 \leq t \leq s \leq \infty.
$$
In particular, $M^{\infty}$ is the space of all diagonal matrices in $M$.

\begin{theorem}[Theorem A, \cite{p}]\label{thm:poeschel1}
Let $D$ be a diagonal matrix whose diagonal $V$ is a distal sequence for $\mathfrak{M}$. Let $0 < s \leq \infty$ and $0 < \sig \leq \min\{1, \frac{s}{2} \}$. If $P \in M^s$ and $\p P \p_s \leq \delta \cdot h(\frac{\sig}{2})^{-1}$, where $\delta > 0$ depends on the dimension $d$ only, then there exists another diagonal matrix $\tilde{D}$ and an invertible matrix $Q$ such that
$$
Q^{-1}(\tilde{D}+P)Q = D .
$$
In fact, $Q,Q^{-1} \in M^{s-\sig}$ and $\tilde{Q} - Q \in M^{\infty}$ with
$$
\p Q-I \p_{s-\sig}, \p V^{-1}-I \p_{s-\sig} \leq C \cdot \p P \p_s,
$$
$$
\p \tilde{D} - D + [P]\p_{\infty} \leq C^2 \cdot \p P \p^2_s,
$$
where $C = \delta^{-1}\cdot h(\frac{\sig}{2})$, and $[\cdot]$ denotes the canonical projection $M^s \to M^{\infty}$. If $P$ is Hermitian, then $Q$ can be chosen to be unitary on $\ell^{2}(\Z^d)$. Note that $h$ is the function \eqref{equ:distalh} associated with $V$.
\end{theorem}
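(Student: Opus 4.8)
The plan is to prove the theorem by a quadratically convergent KAM-type iteration that conjugates away the off-diagonal part of $P$ while feeding its diagonal part into the counterterm $\tilde D$. Concretely, I would look for diagonal matrices $D = \tilde D_0, \tilde D_1, \tilde D_2, \ldots$ and off-diagonal matrices $W_0, W_1, \ldots$ such that, with $Q_0 = I$ and $Q_{k+1} = Q_k (I + W_k)$, the discrepancy
\begin{equation}
E_k := Q_k^{-1}(\tilde D_k + P) Q_k - D
\end{equation}
tends to $0$; then $Q := \lim_k Q_k$ and $\tilde D := \lim_k \tilde D_k$ solve $Q^{-1}(\tilde D + P)Q = D$. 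Since $Q_0 = I$ and $\tilde D_0 = D$, we have $E_0 = P$, so the whole scheme runs on the smallness of $\|P\|_s$.

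The inductive step proceeds as follows. Given $E_k$, split it as $E_k = [E_k] + E_k^{\mathrm{od}}$ into its diagonal and off-diagonal parts. Put $\tilde D_{k+1} = \tilde D_k - Q_k [E_k] Q_k^{-1}$, which removes the diagonal error, so that $Q_k^{-1}(\tilde D_{k+1} + P)Q_k = D + E_k^{\mathrm{od}}$. Then solve the homological equation $[D, W_k] = -E_k^{\mathrm{od}}$, i.e.\ componentwise $(W_k)_{ij} = -(E_k^{\mathrm{od}})_{ij}/(v_i - v_j)$ for $i \ne j$, where $v = (v_i)$ is the diagonal of $D$. Conjugating $D + E_k^{\mathrm{od}}$ by $I + W_k$ cancels the off-diagonal part to first order and leaves $E_{k+1}$, which is \emph{quadratically} small in $E_k$ (plus a new diagonal contribution, to be absorbed at the next step). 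The quantitative heart of the matter is distality of $V$ for $\mathfrak{M}$: the sequence $i \mapsto (v_i - v_{i+m})^{-1}$ lies in $\mathfrak{M}$ with norm at most $Q(|m|)$, whence, on a strip shrunk by $\sigma_k$,
\begin{equation}
\|W_k\|_{s_k - \sigma_k} \le \Bigl(\sup_{x \ge 0} Q(x) e^{-x\sigma_k}\Bigr)\|E_k^{\mathrm{od}}\|_{s_k} = \sigma_k^4\, q(\sigma_k)\, \|E_k^{\mathrm{od}}\|_{s_k}.
\end{equation}
This is precisely where the exponential decay rate is spent and where the function $q$ enters.

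For convergence and the constants, I would choose the losses $\sigma_k > 0$ with $\sum_k \sigma_k \le \sigma/2$ and so as to approximately realize the infimum defining $h(\sigma/2)$. Iterating the step-estimate yields $\|E_{k+1}\|_{s_{k+1}} \lesssim (\text{a factor polynomial in }q(\sigma_k))\,\|E_k\|_{s_k}^2$, so $\|E_k\|$ behaves like $\|P\|_s^{2^k}$ times an accumulated product of $q$-factors whose sharp form is $\prod_i q(\sigma_i)^{2^{-i-1}}$; this accounts for the hypothesis that $h(\sigma/2)$ be finite, the threshold $\|P\|_s \le \delta\, h(\sigma/2)^{-1}$, and the value $C = \delta^{-1}h(\sigma/2)$. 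Superexponential decay of $\|E_k\|$ then forces $\sum_k \|W_k\|_{s-\sigma} < \infty$, so $Q_k$ and $Q_k^{-1}$ converge in $M^{s-\sigma}$ with $\|Q - I\|_{s-\sigma}, \|Q^{-1} - I\|_{s-\sigma} \le C\|P\|_s$ (the $k = 0$ term dominates); the off-diagonal corrections to the counterterm telescope, giving $\tilde D_k \to \tilde D$ in $M^\infty$, and since $\tilde D_1 - D = -[P]$ with every later correction quadratically small, $\|\tilde D - D + [P]\|_\infty \le C^2\|P\|_s^2$. If $P$ is Hermitian, then inductively each $E_k$ is Hermitian and each $W_k$ anti-Hermitian (the divisors $v_i - v_j$ are real), so replacing $I + W_k$ by $e^{W_k}$ throughout makes every factor unitary on $\ell^2(\Z^d)$, hence $Q$ is unitary.

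The step I expect to be the main obstacle is the norm bookkeeping across the infinitely many iterations: controlling the loss of exponential decay in the homological equation purely in terms of $q$, carrying the quadratic remainders so that the gain provably outruns the accumulating $q$-factors, and doing all of this sharply enough that the inductive threshold is governed by $h(\sigma/2)$ rather than a cruder constant — together with checking that the infinite products genuinely define invertible elements of $M^{s-\sigma}$ and that all the auxiliary series converge in the claimed norms.
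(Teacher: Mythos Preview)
The paper does not prove this theorem at all: it is quoted verbatim as Theorem~A from P\"oschel's paper \cite{p} and used as a black box. So there is no ``paper's own proof'' to compare against; the authors simply cite the result.

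That said, your sketch is essentially P\"oschel's original argument from \cite{p}: a KAM iteration in which each step solves the homological equation $[D,W_k] = -E_k^{\mathrm{od}}$ using the distal lower bounds on $|v_i - v_{i+m}|$, trades a shrinking amount of exponential decay for control of the small divisors via $q(\sigma_k)$, and exploits quadratic convergence so that the accumulated product $\prod q(\sigma_i)^{2^{-i-1}}$ --- hence $h(\sigma/2)$ --- governs the threshold. Your identification of the main difficulty (the bookkeeping of strip losses versus quadratic gains, and convergence of the infinite product in $M^{s-\sigma}$) is accurate. One small correction: in your update of the counterterm you write $\tilde D_{k+1} = \tilde D_k - Q_k[E_k]Q_k^{-1}$, but $Q_k[E_k]Q_k^{-1}$ is not diagonal in general; the actual scheme keeps the counterterm diagonal at every step (one subtracts $[E_k]$ after conjugation, or equivalently works with the conjugated error directly), so this line needs a minor adjustment, though it does not affect the overall strategy.
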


An important consequence of the preceding theorem for discrete Schr\"odinger operators is the following.

\begin{theorem}[Corollary A, \cite{p}]\label{thm:pcor}
Let $V$ be a distal sequence for some translation invariant Banach algebra $\mathfrak{M}$ of $d$-dimensional real sequences. Then for $0 \leq \eps \leq \eps_0$, with $\eps_0 > 0$ sufficiently small, there exists a sequence $\tilde{V}$ with $\tilde{V} -V \in \mathfrak{M}$, $\p \tilde{V} - V \p_{\mathfrak{M}} \leq \frac{\eps^2}{\eps^2_0}$, such that the discrete Schr\"odinger operator
$$
(\tilde{H}u)_i = \eps \sum_{|l|_1 = 1}u_{i + l} + \tilde{V}_i u_i, \quad i \in \Z^d
$$
has eigenvalues $\{V_i : i \in \Z^d \}$ and a complete set of corresponding exponentially localized eigenvectors with decay rate
$1 + \log\frac{\eps_0}{\eps}$.
\end{theorem}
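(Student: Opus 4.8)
The plan is to derive this from Theorem~\ref{thm:poeschel1}, with the $\eps$-coupled hopping term playing the role of the perturbation. Write $\Delta$ for the adjacency operator of $\Z^d$, $(\Delta u)_i=\sum_{|l|_1=1}u_{i+l}$, so that the operator in the statement is $\tilde H=\tilde D+\eps\Delta$ with $\tilde D=\mathrm{diag}(\tilde V)$. I would apply Theorem~\ref{thm:poeschel1} to the diagonal matrix $D=\mathrm{diag}(V)$ and to $P=\eps\Delta$, producing a diagonal $\tilde D$ and a conjugating $Q$ with $Q^{-1}(\tilde D+\eps\Delta)Q=D$; then $\tilde H:=\tilde D+\eps\Delta$ has spectrum $\{V_i\}$, and from $\tilde H Q=QD$ one reads off that the $i$-th column $Qe_i$ is an eigenvector for the eigenvalue $V_i$, centered at $i$. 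The required hypotheses are immediate: each diagonal $\Delta_k$ is either the zero sequence or the constant sequence $1$, and it is nonzero only for $|k|=1$, so $\p P\p_s=\eps e^s$ for every finite $s$ and $[P]=[\,\eps\Delta\,]=0$; and $P=\eps\Delta$ is Hermitian, so $Q$ may be taken unitary on $\ell^2(\Z^d)$.

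The crux is the choice of the parameters $s,\sig$ of Theorem~\ref{thm:poeschel1} as functions of $\eps$. I would take $\sig=1$ and $s=2+\log\frac{\eps_0}{\eps}$, so that $s\ge2$ for $0<\eps\le\eps_0$ (hence $\sig\le\min\{1,s/2\}$) and the decay exponent of $Q$, namely $s-\sig$, equals the target value $1+\log\frac{\eps_0}{\eps}$. With this choice $\p P\p_s=\eps e^s=e^2\eps_0$, so the admissibility requirement $\p P\p_s\le\delta\,h(1/2)^{-1}$ reduces to $\eps_0\le\delta\,e^{-2}h(1/2)^{-1}$; this---together with a further explicit upper bound on $\eps_0$ needed for the perturbation estimate below---is the meaning of ``$\eps_0$ sufficiently small''. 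Theorem~\ref{thm:poeschel1} then supplies a diagonal $\tilde D$ and a unitary $Q$ with $Q^{-1}(\tilde D+\eps\Delta)Q=\mathrm{diag}(V)$, $\p Q-I\p_{s-1}\le C\p P\p_s$, and $\p\tilde D-D\p_\infty\le C^2\p P\p_s^2$ (the projection term $[P]$ being zero), where $C=\delta^{-1}h(1/2)$.

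From here the four assertions follow. The spectrum of $\tilde H=\tilde D+\eps\Delta$ is $\{V_i:i\in\Z^d\}$ since $Q^{-1}\tilde H Q=\mathrm{diag}(V)$; the eigenvectors $Qe_i$ form an orthonormal basis of $\ell^2(\Z^d)$ since $Q$ is unitary, which is the required completeness. For the decay, recall that the Banach-algebra norm of a sequence dominates its sup norm, so $Q-I\in M^{s-1}$ forces $|(Qe_i)(j)|\le(1+\p Q-I\p_{s-1})e^{-(s-1)|i-j|}$ for all $j$; since $\p Q-I\p_{s-1}\le C\p P\p_s=Ce^2\eps_0$ is bounded uniformly in $\eps$, this is exponential decay off the center $i$ at rate $s-1=1+\log\frac{\eps_0}{\eps}$. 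Finally, setting $\tilde V$ to be the diagonal of $\tilde D$, one has $\tilde V-V=(\tilde D-D)_0\in\mathfrak M$ and $\p\tilde V-V\p_{\mathfrak M}=\p\tilde D-D\p_\infty\le C^2\p P\p_s^2$; to push this below $\eps^2/\eps_0^2$ one invokes the sharper form of the error estimate of Theorem~\ref{thm:poeschel1} in \cite{p}, in which the large-weight norm $\p P\p_s$ of the perturbation enters only against the weak norm $\p P\p_0=\eps$, turning the right-hand side into a genuine $O(\eps^2)$ and hence $\le\eps^2/\eps_0^2$ once $\eps_0$ is small.

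I expect this last step---the bound $\p\tilde V-V\p_{\mathfrak M}\le\eps^2/\eps_0^2$---to be the delicate point, precisely because it competes with the decay assertion: achieving a decay rate of order $\log(1/\eps)$ forces the exponential weight $s$ to be of order $\log(1/\eps)$, which makes $\p P\p_s=\eps e^s$ of order $1$ rather than of order $\eps$, so that the crude estimate $\p\tilde D-D\p_\infty\le C^2\p P\p_s^2$ only yields a constant bound; one genuinely needs the second-order structure of P\"oschel's iteration, in which the perturbation at each step contributes its $s$-norm only linearly against a factor measured in a weaker norm. Alternatively, one may first pass to $\eps^{-1}\tilde H=\Delta+\eps^{-1}\tilde V$ and apply Theorem~\ref{thm:poeschel1} to $\mathrm{diag}(\eps^{-1}V)$ and $P=\Delta$ (noting that $\eps^{-1}V$ is still distal), so that the extra powers of $\eps$ surface through the constant $C$ and the final rescaling by $\eps$. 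All remaining ingredients---the elementary properties of $\Delta$, the inequality $\p\cdot\p_\infty\le\p\cdot\p_{\mathfrak M}$, and the bookkeeping of constants---are routine.
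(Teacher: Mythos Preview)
The paper does not prove this statement at all: it is quoted as Corollary~A of P\"oschel~\cite{p}, and in Section~\ref{sec:localization} the authors explicitly send the reader to \cite[Proof of Corollary~A]{p} for the fact that $Q\in M^r$. So there is no in-paper argument to compare against; your proposal is a reconstruction of P\"oschel's own derivation of his Corollary~A from his Theorem~A.

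That reconstruction is correct in outline and matches P\"oschel's route: take $D=\mathrm{diag}(V)$, $P=\eps\Delta$, note that $[P]=0$ and $P$ is Hermitian so $Q$ is unitary, and read off the eigenvectors as the columns of $Q$. Your parameter choice $\sig=1$, $s=2+\log(\eps_0/\eps)$, with $\eps_0=\delta e^{-2}h(1/2)^{-1}$, is precisely the largest $s$ for which the smallness hypothesis $\|P\|_s\le\delta h(1/2)^{-1}$ holds, and it yields the stated decay rate $s-\sig=1+\log(\eps_0/\eps)$.

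You have also correctly isolated the one real issue: at this maximal $s$ one has $\|P\|_s=e^2\eps_0$, so the bound $\|\tilde D-D\|_\infty\le C^2\|P\|_s^2$ from Theorem~\ref{thm:poeschel1} is only a constant, not $\eps^2/\eps_0^2$. The clean resolution, implicit in P\"oschel's argument, is that the objects $\tilde D$ and $Q$ produced by the KAM iteration depend only on $D$ and $P$, while $s,\sig$ are purely bookkeeping parameters that enter only the \emph{estimates}. One therefore applies Theorem~\ref{thm:poeschel1} twice to the same $D$ and $P=\eps\Delta$: once with $s=2$, $\sig=1$, which gives $\|P\|_s=\eps e^2$ and hence $\|\tilde D-D\|_\infty\le C^2e^4\eps^2=\eps^2/\eps_0^2$ (check: $C e^2\eps_0=\delta^{-1}h(1/2)\cdot e^2\cdot\delta e^{-2}h(1/2)^{-1}=1$); and once with $s=2+\log(\eps_0/\eps)$, $\sig=1$, which gives $Q\in M^{1+\log(\eps_0/\eps)}$. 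Your alternative of invoking a sharper mixed-norm estimate from the iteration in~\cite{p} would also work, but the two-parameter-choices argument is simpler and avoids reopening the proof of Theorem~A.
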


\section{Proof of Theorem~\ref{thm:main}} \label{sec:localization}

We are now proving our main result, Theorem~\ref{thm:main}. By Lemma~\ref{lem:distal}, there exist a Cantor group $\Omega$ that admits a minimal $\Z^d$ action $T$ by translations and an $f \in C(\Omega, \R)$ such that $(f(T^n \omega_e ))_{n \in \Z^d}$ is a distal sequence. Clearly, $C(\Omega,\R)$ will induce a class of limit-periodic potentials. We denote it by $\mathcal{C}$, and one can check that this class is a translation invariant Banach algebra with the $\ell^\infty$-norm. By Theorem~\ref{thm:pcor}, there exists a sufficiently small $\eps_0 > 0$ such that for $0 < \eps \leq \eps_0$, there is a sequence $\tilde{V} \in \mathcal{C}$ with $\p\tilde{V}-V \p_{\infty} \leq \frac{\eps^2_0}{\eps^2}$ so that the discrete Schr\"odinger operator
$$
(Hu)_n = \sum_{|m-n|_1 = 1}u_{m} + \frac{\tilde{V}_n}{\eps}u_n,\quad n \in \Z^d
$$
has eigenvalues $\{\frac{V_n}{\eps}, n \in \Z^d\}$ and a total set of corresponding exponentially localized eigenvectors with decay rate $r = 1+ \log{\frac{\eps_0}{\eps}}$. There exists a sampling function $\tilde{f} \in C(\Omega, \R)$ such that $\tilde{f}(T^n \omega_e ) = \frac{\tilde{V}_n}{\eps}$ since $\tilde{V} \in \mathcal{C}.$

More explicitly, for the Schr\"odinger operator $H$ associated with the potential $\tilde{f}(T^{n} \omega_e )$, denote its matrix representation with respect to the standard orthonormal basis of $\ell^2(\Z^d)$, $\{ \delta_n \}_{n \in \Z^d}$, by the same symbol. P\"oschel's theorem also implies that there exists a unitary $Q: \ell^2(\Z^d) \to \ell^2(\Z^d)$ (with corresponding matrix denoted by the same symbol) such that
\begin{equation}\label{equ:jacob1}
H \cdot Q = Q \cdot D,
\end{equation}
where $D = (D_{i,j})_{i,j \in \Z^d}$ is a diagonal matrix with the diagonal $(\frac{V_n}{\eps})_{n \in \Z^d}$.

Write $Q_n = (Q_{i, i+n})_{i \in \Z^d}$ as the $n$-th diagonal of $Q$ (note that $n \in \Z^d$). By Theorem~\ref{thm:poeschel1} we have that $Q \in M^{r}$, where $r > 0$ and $M^r$ is a space of matrices associated with the Banach algebra $\mathcal{C}$. (Note that $Q \in M^{r}$ follows from \cite[Proof of Corollary~A]{p}.) Since $Q \in M^r$, we have $\p Q\p_r = \sup_{n \in \Z^d} \p Q_n \p_{\infty} e^{|n|r} < C$ where $C$
is a constant. So $\p Q_n \p_\infty < C e^{-r|n|}, \forall n \in \Z^d$. Let $Q^{(n)} = (Q_{i, n})_{i \in \Z^d}$ be the $n$-th column of $Q$, that is, $Q^{(n)}$ is an eigenfunction of $H$ with respect to the eigenvalue $\frac{V_n}{\eps}$. Since $Q_{k,n} = Q^{(n)}(k) = Q_{k, k +(n-k)}$, $Q^{(n)}(k)$ is also an entry in $Q_{n-k}$, and so $|Q^{(n)}(k)| < Ce^{-r|n-k|}$. $C$ and $r$ are independent of $n$, so the corresponding Schr\"odinger operator $H$ has ULE. This property is strong enough to imply that the pure point spectrum of $H$ is independent of $\om$ \cite{j}, that is, it is phase stable. In order to see this more explicitly, we will prove it in our framework, and furthermore show that for other $\om$'s, the associated Schr\"odinger operator still has ULE with the same constants $C$ and $r$. Note that the latter property does not follow from Jitomirskaya's result in \cite{j}.

\begin{lemma}
Suppose we are given matrices $A,B \in \R^{\Z^d \times \Z^d}$, one of which has only finitely many non-zero diagonals. Then,
we have for the $k$-th diagonal of $Z = AB$,
$$
Z_k = \sum_{l \in \Z^d} A_l \cdot T^{l}(B_{k-l}),
$$
where $\cdot$ is the pointwise multiplication {\rm (}i.e., $A_l \cdot T^{l}(B_{k-l})$ is still a sequence{\rm )} and $T$ is the translation defined by $T^{l}(B_{k-l})(n) = B_{k-l}(n+l)$ for $n \in \Z^d$.
\end{lemma}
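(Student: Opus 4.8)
The plan is to unwind the definitions of the diagonals and reindex the sum defining matrix multiplication. Writing $Z = AB$ entrywise, for $i,j \in \Z^d$ we have
\[
Z_{i,j} = \sum_{p \in \Z^d} A_{i,p}\, B_{p,j}.
\]
First I would note that the hypothesis that one of $A$, $B$ has only finitely many non-zero diagonals guarantees this sum is finite for every fixed $i,j$: if, say, $A$ has only finitely many non-zero diagonals, then $A_{i,p} = 0$ unless $p - i$ lies in a fixed finite subset of $\Z^d$, and symmetrically if $B$ does. Hence $Z$ is well defined and all the sums appearing below are finite sums of sequences.

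To extract the $k$-th diagonal, set $j = i+k$, so that $Z_k(i) = Z_{i,i+k}$, and then perform the bijective change of summation index $p = i + l$ (as $p$ ranges over $\Z^d$, so does $l$):
\[
Z_k(i) = \sum_{l \in \Z^d} A_{i,i+l}\, B_{i+l,\,i+k}.
\]
Now I would rewrite the two factors in terms of diagonals. By definition $A_{i,i+l} = A_l(i)$. For the second factor, write $i+k = (i+l) + (k-l)$, so that $B_{i+l,\,i+k} = B_{k-l}(i+l)$, and by the definition of $T$ in the statement this equals $\bigl(T^l(B_{k-l})\bigr)(i)$. Therefore
\[
Z_k(i) = \sum_{l \in \Z^d} A_l(i)\,\bigl(T^l(B_{k-l})\bigr)(i) = \Bigl( \sum_{l \in \Z^d} A_l \cdot T^l(B_{k-l}) \Bigr)(i),
\]
and since $i \in \Z^d$ was arbitrary this is precisely the asserted identity.

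There is no real obstacle here; the computation is pure bookkeeping, and the single point requiring care is the role of the finiteness hypothesis. It is needed both so that the product $AB$ makes sense termwise and so that $\sum_{l} A_l \cdot T^l(B_{k-l})$ is a genuine (finite) sum of elements of $\mathfrak{M}$ — here one uses that $\mathfrak{M}$ is translation invariant, so each $T^l(B_{k-l})$ is again in $\mathfrak{M}$ being a translate of $B_{k-l}$, together with closure under pointwise products and finite sums. I would also remark that in the intended application one of the two matrices will be the Schr\"odinger operator $H$ (which has only finitely many non-zero diagonals), so the hypothesis costs nothing there.
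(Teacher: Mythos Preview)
Your proof is correct and follows essentially the same route as the paper: both set $j=i+k$, reindex the product sum via $p=i+l$, and then identify each factor with the appropriate diagonal entry. Your version is more explicit about the role of the finiteness hypothesis and the translation to diagonal notation, but the underlying computation is identical.
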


\begin{proof}
Since for $n,k \in \Z^d$, we have
\begin{align*}
z_{n,n+k} = & \sum_{t \in \Z^d} a_{n,t}b_{t,n+k} \\
= & \sum_{l \in \Z^d} a_{n,n+l}b_{n+l,n+k} \\
= & \sum_{l \in \Z^d}a_{n,n+l}b_{n+l,n+l+k-l},
\end{align*}
the lemma follows.
\end{proof}

Given an $\om \in \Omega$, we have that $(\tilde{f}(T^{n} \om ))_{n \in \Z^d} \in \mathrm{hull}((\tilde{f}(T^{n} \omega_e ))_{n \in \Z^d})$. If $\omega$ is in the orbit of $\omega_e$, that is, $\om = T^{t} \omega_e$ for some $t \in \Z^d$, ULE with the same constants and eigenvalues follows from unitary operator equivalence directly. However, we write this out in detail so that we see clearly what happens in the case where $\om$ can only be approximated by elements of the form $T^{t} \omega_e$.

By the previous lemma, \eqref{equ:jacob1} is equivalent to the following form:
$$
\forall k \in \Z^d : \qquad \sum_{l \in \Z^d} H_l \cdot T^l (Q_{k-l}) = \sum_{l \in \Z^d} Q_l \cdot T^l(D_{k-l}).
$$
If the potential is replaced by $\tilde{f}(T^{n+t} \omega_e)$, the corresponding matrix $\tilde{H}$ has $\tilde{H}_j(n)=H_j(n+t) = T^t(H_j)(n)$. We have
$$
\forall k \in \Z^d : \qquad \sum_{l \in \Z^d} T^t(H_l) \cdot T^{t+l} (Q_{k-l}) = \sum_{l \in \Z^d} T^t(Q_l) \cdot T^{l+t}(D_{k-l}).
$$
We let $\tilde{Q}_k(n) = Q_k(n+t),\ k \in \Z^d $ and $\tilde{D}_0(n) = D_0(n+t)$. Reversing the steps above, this means that
$$
\tilde H \cdot \tilde Q = \tilde Q \cdot \tilde D.
$$
We can conclude that $\tilde{H}$ has the pure point spectrum $\overline{\{\frac{V_{n+t}}{\eps} : n \in \Z^d \}} = \overline{\{\frac{V_n}{\eps}:n \in \Z^d \}}$. Moreover, $\tilde{Q}$ is the eigenfunction matrix of $\tilde{H}$, and for any $n,k \in \Z$, $|\tilde{Q}_k (n)| = |Q_k (n+t)| \leq Ce^{-r|k|}$. So for the eigenfunction $\tilde{Q}^{(k)}$ of $\tilde{H}$, we still have $|\tilde{Q}^{(k)}(n)| < Ce^{-r|k-n|}$, and hence ULE with the same constants follows.

If $\lim_{m \to \infty} T^{t_m} \omega_e = \om$, that is, $\tilde{f}(T^{n} \om ) = \lim_{m \to \infty}\tilde{f}(T^{n+t_m} \omega_e)$, then for $\tilde{f}(T^{n+t_m} \omega_e )$, we have already seen that
\begin{equation}\label{equ:shift}
\tilde{H}^{\langle m \rangle}\cdot \tilde{Q}^{\langle m \rangle} = \tilde{Q}^{\langle m \rangle} \cdot \tilde{D}^{\langle m \rangle},
\end{equation}
where the notation $\langle m \rangle$ means that the matrices are corresponding to $\tilde{f}(T^{n+t_m} \omega_e )$. Let $\tilde{Q}^{\langle m \rangle}_k$ be the $k$-th diagonal of $\tilde{Q}^{\langle m \rangle}$. Clearly, $\tilde{Q}^{\langle m \rangle}_k(n) = Q_k(n+t_m)$. There exists some $\tilde{f}_k \in C(\Omega, \R)$ such that $\tilde{Q}^{\langle m \rangle}_k(n) = Q_{k}(n+t_m) = \tilde{f}_k(T^{n + t_m} \omega_e)$ since $Q_k \in \mathcal{C}$. So $\lim_{m \to \infty} \tilde{Q}^{(m)}_k(n)  = \lim_{m \to \infty} \tilde{f}_k (T^{n+t_m}\omega_e) = \tilde{f}_k (T^{n}\om)$, and we denote $\tilde{f}_k (T^{n} \om)$ by $\tilde{Q}^{\langle \infty \rangle}_k (n)$. Similarly, $\lim_{m \to \infty} \tilde{D}^{\langle m \rangle}$ exists and $\tilde{D}^{\langle \infty \rangle}_0(n) = f(T^n \om )$, where $\tilde{D}^{\langle \infty \rangle}_0$ is the 0-th diagonal of $\tilde{D}^{\langle \infty \rangle}$. Thus, as we let $m \to \infty$, \eqref{equ:shift} takes the following form:
\begin{equation}\label{equ:limit}
\tilde{H}^{\langle \infty \rangle} \cdot \tilde{Q}^{\langle \infty \rangle} = \tilde{Q}^{\langle \infty \rangle} \cdot \tilde{D}^{\langle \infty\rangle},
\end{equation}
where $\tilde{H}^{\langle \infty \rangle}$ is (the matrix representation of) the Schr\"odinger operator with the potential $\tilde{f}(T^n \om)$. Equation~\eqref{equ:limit} implies that $\tilde{H}^{\langle \infty \rangle}$ has the pure point spectrum  $\overline{\{\frac{V_n}{\eps} : n \in \Z^d \}}$, and its eigenfunctions are uniformly localized since $|(\tilde{Q}^{\langle \infty \rangle})^{(k)}(n)| < Ce^{-r|k-n|}$ for any $k, n \in \Z^d$, where $(\tilde{Q}^{\langle \infty \rangle})^{(k)}$ is the $k$-th column of $\tilde{Q}^{\langle \infty\rangle}$. This finishes the proof.


\begin{thebibliography}{10}

\bibitem{a} A.\ Avila, On the spectrum and Lyapunov exponent of limit periodic Schr\"odinger operators,
\textit{Commun.\ Math.\ Phys.}\ \textbf{288} (2009), 907--918


\bibitem{dg1} D.\ Damanik, Z.\ Gan, Spectral properties of limit-periodic Schr\"odinger operators,
\textit{Commun.\ Pure Appl.\ Anal.}\ \textbf{10} (2011), 859--871

\bibitem{dg2} D.\ Damanik, Z.\ Gan, Limit-periodic Schr\"{o}dinger operators in the regime of positive Lyapunov exponents,
\textit{J.\ Funct.\ Anal.}\ \textbf{258} (2010), 4010--4025

\bibitem{dg3} D.\ Damanik, Z.\ Gan, Limit-periodic Schr\"{o}inger operators with uniformly localized eigenfunctions,
\textit{J. d'Analyse Math},  \textbf{115} (2011), 33--49

\bibitem{dj2} R.\ del Rio, S.\ Jitomirskaya, Y.\ Last, B.\ Simon, What is localization?, \textit{Phys.\ Rev.\ Lett.}\ \textbf{75} (1995), 117--119

\bibitem{dj} R.\ del Rio, S.\ Jitomirskaya, Y.\ Last, B.\ Simon, Operators with singular continuous spectrum,
IV.~Hausdorff dimensions, rank one perturbations, and localization, \textit{J.\ Anal.\ Math.}\ \textbf{145} (1997), 312--322

\bibitem{g} Z.\ Gan, An exposition of the connection between limit-periodic potentials and profinite groups, \textit{Math.\ Model.\ Nat.\ Phenom.}\ \textbf{5:4} (2010), 158--174

\bibitem{j}  S.\ Jitomirskaya, Continuous spectrum and uniform localization for ergodic Schr\"odinger operators,
\textit{J.\ Funct.\ Anal.}\ \textbf{145} (1997), 312--322


\bibitem{p} J.\ P\"oschel, Examples of discrete Schr\"odinger operators with pure point spectrum,
\textit{Commun.\ Math.\ Phys.}\ \textbf{88} (1983), 447--463



\end{thebibliography}
\end{document}